\newtheorem{thm}{Theorem}[section]
\newtheorem{lem}[thm]{Lemma}
\theoremstyle{definition}
\newtheorem{example}[thm]{Example}
\newtheorem{defn}[thm]{Definition}
\newtheorem{rem}[thm]{Remark}\newtheorem{ass}[thm]{Assumption}
\numberwithin{equation}{thm}
\begin{document}
\title[MUTATION PAIRS AND TRIANGULATED QUOTIENTS]
{MUTATION PAIRS AND TRIANGULATED QUOTIENTS}

\author{Zengqiang Lin and Minxiong Wang}
\address{ School of Mathematical sciences, Huaqiao University,
Quanzhou\quad 362021,  China.} \email{lzq134@163.com}
\address{ School of Mathematical sciences, Huaqiao University,
Quanzhou\quad 362021,  China.} \email{mxw@hqu.edu.cn}

\thanks{Supported by the National Nature Science Foundation of China (Grants No. 11101084, 11126331),
and the Natural Science Foundation of Fujian Province (Grants No. 2013J05009)}

\subjclass[2000]{18E10, 18E30}

\keywords{Quotient category; mutation pair; pseudo-triangulated
category; triangulated category.}

\begin{abstract} We define mutation pair in a pseudo-triangulated category. We prove that under certain conditions, for a mutation pair in a pseudo-triangulated category, the corresponding quotient category carries a natural triangulated structure. This result unifies many previous constructions of quotient triangulated categories.
\end{abstract}

\maketitle

\section{Introduction}
Triangulated category was introduced by Grothendieck and later by Verdier in the sixties of last century. It is an important structure in both geometry and algebra. Quotient categories give a way to produce triangulated categories.

As shown by Happel \cite{[H]}, if we are given an exact category $(\mathcal{B},\mathcal{S})$ satisfying Frobenius condition, that is, $(\mathcal{B},\mathcal{S})$ has enough $\mathcal{S}$-injectives and enough $\mathcal{S}$-projectives and the $\mathcal{S}$-injectives coincide with the $\mathcal{S}$-projectives, then the quotient category $\mathcal{B}/\mathcal{I}$ carries a triangulated structure, where $\mathcal{I}$ is the full subcategory of $\mathcal{S}$-injectives. Beligiannis \cite{[B]} obtained a similar result by replacing $\mathcal{B}$ with a triangulated category $\mathcal{C}$ and replacing $\mathcal{S}$ with a proper class of triangles $\mathcal{E}$. Based on \cite{[BM]},  J{\o}rgensen \cite{[J]} showed that if $\mathcal{X}$ is a functorially finite subcategory of a triangulated category $\mathcal{C}$ with Auslander-Reiten translation $\tau$, and if $\tau\mathcal{X}=\mathcal{X}$, then the quotient $\mathcal{C}/\mathcal{X}$ is a triangulated category.

As a generalization of mutation of cluster tilting objects in cluster categories, Iyama and Yoshino \cite{[IY]} defined  mutation pair in a triangulated category. They showed that if $(\mathcal{Z},\mathcal{Z})$ is a $\mathcal{D}$-mutation pair in a triangulated category $\mathcal{C}$ and
$\mathcal{Z}$ is an extension-closed subcategory of $\mathcal{C}$, where $\mathcal{D}\subset\mathcal{Z}$ and $\mathcal{D}$ is rigid,
then the quotient $\mathcal{Z}/\mathcal{D}$ is a triangulated category. Recently Liu and Zhu \cite{[LZ]} defined  $\mathcal{D}$-mutation  pair in a right triangulated category,  and got a similar result, which unifies the constructions of Iyama-Yoshino in \cite{[IY]}
and J{\o}rgensen in \cite{[J]}.

To unify the constructions of quotient triangulated structures in \cite{[H]} and \cite{[IY]}, Nakaoka \cite{[N]} defined  Frobenius condition on a pseudo-triangulated category, which is similar to that on an exact category, and constructed a quotient triangulated category. Pseudo-triangulated categories are a natural   generalization of abelian categories and triangulated categories, whose right triangles and left triangles behave much better than those on pretriangulated categories. Unfortunetly  Nakaoka pointed out that his construction cannot cover  Beligiannis's result in \cite{[B]}.

The aim of this article is to give a way to unify the different constructions of quotient triangulated category. We define a $\mathcal{D}$-mutation pair $(\mathcal{Z},\mathcal{Z})$ in a pseudo-triangulated category $\mathcal{C}$, and prove that the quotient category  $\mathcal{Z}/\mathcal{D}$ carries a triangulated structure under certain reasonable conditions. As applications, Our result unifies the quotient triangulated categories considered by Iyama-Yoshino  \cite{[IY]}, by Happel  \cite{[H]}, by Beligiannis  \cite{[B]}, by J{\o}rgensen  \cite{[J]} and by Nakaoka \cite{[N]} respectively, but not \cite{[LZ]}.

The paper is organized as follows. In Section 2, we review the definition of pseudo-triangulated category and give some preliminaries. In Section 3, we define $\mathcal{D}$-mutation pair in a pseudo-triangulated category and prove our main result Theorem \ref{3d6}. In Section 4, we give five examples.

\section{pseudo-triangulated categories}

We recall some basics on pseudo-triangulated categories from \cite{[N]}.

 Let $\mathcal{C}$ be an additive category  and $\Sigma:\mathcal{C}\rightarrow\mathcal{C}$  an additive
functor. A sextuple $(A, B, C, f, g, h)$ in $\mathcal{C}$ is of the form  $A\xrightarrow{f}B\xrightarrow{g}C\xrightarrow{h}\Sigma A$. A morphism of sextuples from $A\xrightarrow{f}B\xrightarrow{g}C\xrightarrow{h}\Sigma A$ to $A'\xrightarrow{f'}B'\xrightarrow{g'}C'\xrightarrow{h'}\Sigma A'$ is a triple $(a,b,c)$ of morphisms such that the following diagram commutes:
$$\xymatrix{
A\ar[r]^{f}\ar[d]^{a} & B\ar[r]^{g}\ar[d]^{b} &
C\ar[r]^{h}\ar[d]^{c} & \Sigma A\ar[d]^{\Sigma a} \\
A'\ar[r]^{f'} & B'\ar[r]^{g'} & C'\ar[r]^{h'} & \Sigma A'. }
$$ If in addition $a,b$ and $c$ are isomorphisms in $\mathcal{C}$, then $(a,b,c)$ is called an isomorphism of sextuples.

\begin{defn} (cf. \cite{[BM]}, \cite{[N]}) Let $\mathcal{C}$ be an additive category  and $\Sigma:\mathcal{C}\rightarrow\mathcal{C}$  an additive
functor. Let $\rhd$ be a class of sextuples, whose elements are called $right\ triangles$. The triple $(\mathcal{C},\Sigma,\rhd)$ is called a $pseudo-right \ triangulated\ category$, and $\Sigma$ is called $suspension\ functor$, if  the following axioms are satisfied:

(RTR0) $\rhd$ is closed under isomorphisms.

(RTR1) For any $A\in\mathcal{C}$,
$0\xrightarrow{}A\xrightarrow{1_{A}}A\xrightarrow{}0$ is a right
triangle. And for any morphism $f:A\rightarrow B$ in $\mathcal{C}$,
there exists a right triangle
$A\xrightarrow{f}B\xrightarrow{g}C\xrightarrow{h}\Sigma A$.

(RTR2) If $A\xrightarrow{f}B\xrightarrow{g}C\xrightarrow{h}\Sigma A$ is a
right triangle, then so is $B\xrightarrow{g}C\xrightarrow{h}\Sigma
A\xrightarrow{-\sum f}\Sigma B$.

(RTR3) For any two right triangles
$A\xrightarrow{f}B\xrightarrow{g}C\xrightarrow{h}\Sigma A$ and
$A'\xrightarrow{f'}B'\xrightarrow{g'}C'\xrightarrow{h'}\Sigma A'$,
and any two morphisms $a:A\rightarrow A'$, $b:B\rightarrow B'$ such
that $bf=f'a$, there exists $c:C\rightarrow C'$ such that $(a,b,c)$ is a morphism of right triangles.

(RTR4)  Let $A\xrightarrow{f}B\xrightarrow{g}C\xrightarrow{h}\Sigma A$,
 $A\xrightarrow{l}M\xrightarrow{m}B'\xrightarrow{n}\Sigma A$ and
 $A'\xrightarrow{l'}M\xrightarrow{m'}B\xrightarrow{n'}\Sigma A'$
be right triangles satisfying $m'l=f$. Then there exist $g'\in\mathcal{C}(B',C)$ and $h'\in\mathcal{C}(C,\Sigma A')$ such that the following diagram is commutative and the third column is a right triangle.
$$\xymatrix{
 & A'\ar@{=}[r]\ar[d]^{l'} & A'\ar[d]^{f'}\\
 A\ar[r]^{l}\ar@{=}[d] & M \ar[r]^{m}\ar[d]^{m'} &
 B'\ar[r]^{n}\ar@{-->}[d]^{g'} & \Sigma A \ar@{=}[d]\\
 A\ar[r]^{f} & B\ar[r]^{g}\ar[d]^{n'} &
C\ar[r]^{h}\ar@{-->}[d]^{h'} & \Sigma A \ar[d]^{\Sigma l}\\
 & \Sigma A'\ar@{=}[r] & \Sigma A' \ar[r]^{-\Sigma l'} & \Sigma M} $$
\end{defn}

\begin{rem} In particular, if suspension functor $\Sigma$ is an equivalence, then $\mathcal{C}$ is a triangulated category. Pseudo-left triangulated categories $(\mathcal{C},\Omega, \triangleleft)$ are defined dually, where $\Omega:\mathcal{C}\rightarrow\mathcal{C}$ is called loop functor, and $\triangleleft$ is the class of left triangles.
\end{rem}

\begin{rem} Condition (RTR4) is slightly different from that in \cite{[BM]}. But the following two lemmas  are also true in this sense.
\end{rem}

\begin{lem} (\cite[Lemma 1.3]{[ABM]})
Let $\mathcal{C}$ be a pseudo-right triangulated category, and
$A\xrightarrow{f}B\xrightarrow{g}C\xrightarrow{h}\Sigma A$  a
right triangle.

(1) $g$ is a pseudocokernel of $f$, and $h$ is a pseudocokernel of
$g$.

(2) If $\Sigma$ is fully faithful, then $f$ is a pseudokernel of
$g$, and $g$ is a pseudokernel of $h$.
\end{lem}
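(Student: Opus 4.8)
The plan is to deduce everything from the rotation axiom (RTR2) and the lifting axiom (RTR3), reserving the full faithfulness of $\Sigma$ for the single decisive descent step in part (2).

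For (1), I would first record that $gf=0$. Rotating the trivial right triangle $0\to A\xrightarrow{1_A}A\to 0$ by (RTR2) yields the right triangle $A\xrightarrow{1_A}A\to 0\to\Sigma A$, and applying (RTR3) to the pair of maps $(1_A,f)$ between this triangle and the given one produces a morphism of right triangles whose middle square forces $gf=0$. For the universal property, suppose $g'\colon B\to C'$ satisfies $g'f=0$. I would then apply (RTR3) to the given right triangle and the trivial triangle $0\to C'\xrightarrow{1_{C'}}C'\to 0$, using the maps $a=0\colon A\to 0$ and $b=g'\colon B\to C'$; the required compatibility $g'f=0$ holds by hypothesis, so the resulting third morphism $c\colon C\to C'$ satisfies $cg=g'$, showing $g$ is a pseudocokernel of $f$. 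Rotating the given triangle once by (RTR2) and repeating the argument verbatim gives that $h$ is a pseudocokernel of $g$.

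For (2), assume $\Sigma$ is fully faithful and let $f'\colon A'\to B$ satisfy $gf'=0$; I must produce $a\colon A'\to A$ with $fa=f'$. First I would complete $f'$ to a right triangle $A'\xrightarrow{f'}B\xrightarrow{g'}C'\xrightarrow{h'}\Sigma A'$ by (RTR1). Since $g'$ is a pseudocokernel of $f'$ by part (1) and $gf'=0$, there is $\gamma\colon C'\to C$ with $\gamma g'=g$. Now rotate both this triangle and the given one once by (RTR2), obtaining $B\xrightarrow{g'}C'\xrightarrow{h'}\Sigma A'\xrightarrow{-\Sigma f'}\Sigma B$ and $B\xrightarrow{g}C\xrightarrow{h}\Sigma A\xrightarrow{-\Sigma f}\Sigma B$. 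The pair $(1_B,\gamma)$ satisfies $\gamma g'=g\cdot 1_B$, so (RTR3) yields $c\colon\Sigma A'\to\Sigma A$ completing a morphism of right triangles, and reading off the last square gives $(\Sigma f)c=\Sigma f'$.

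The final and decisive step is the descent: because $\Sigma$ is full, $c=\Sigma a$ for some $a\colon A'\to A$, and then $\Sigma(fa)=(\Sigma f)(\Sigma a)=\Sigma f'$, so faithfulness of $\Sigma$ gives $fa=f'$. This, together with $gf=0$ from (1), shows $f$ is a pseudokernel of $g$; applying this first claim of (2) to the once-rotated triangle $B\xrightarrow{g}C\xrightarrow{h}\Sigma A\xrightarrow{-\Sigma f}\Sigma B$ then shows $g$ is a pseudokernel of $h$. I expect the main obstacle to be precisely this descent: (RTR3) only ever manufactures a morphism one suspension too high (here $c\colon\Sigma A'\to\Sigma A$ rather than the desired $a\colon A'\to A$), and it is exactly the full faithfulness hypothesis of (2)---absent in (1)---that allows one to pull $c$ back down to $a$.
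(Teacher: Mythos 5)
The paper does not prove this lemma at all: it is imported by citation from \cite{[ABM]} (Assem--Beligiannis--Marmaridis, Lemma 1.3), so there is no in-paper proof to compare against. Your argument is correct and is the standard one for this statement: part (1) follows from (RTR1)--(RTR3) by mapping into (resp.\ out of) the trivial triangle $0\to C'\xrightarrow{1_{C'}}C'\to 0$, and then rotating via (RTR2); part (2) correctly isolates where full faithfulness of $\Sigma$ is indispensable, namely that (RTR3) applied to the once-rotated triangles only produces a comparison morphism $c\colon\Sigma A'\to\Sigma A$ satisfying $(\Sigma f)c=\Sigma f'$, and fullness plus faithfulness are exactly what let you write $c=\Sigma a$ and descend the identity $\Sigma(fa)=\Sigma f'$ to $fa=f'$. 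All the square-commutativity computations you read off (e.g.\ $cg=g'$ in (1) and the sign cancellation $-\Sigma f'=-(\Sigma f)c$ in (2)) check out against the paper's definition of morphism of sextuples, so the proposal can stand in for the cited external proof.
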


\begin{lem}(\cite[Proposition 2.13]{[LZ]}) \label{2d4}
Let $(a,b,c)$ be a morphism of right triangles in a pseudo-right triangulated category $C$:
$$\xymatrix{
A\ar[r]^{f}\ar[d]^{a} & B\ar[r]^{g}\ar[d]^{b} &
C\ar[r]^{h}\ar[d]^{c} & \Sigma A\ar[d]^{\Sigma a} \\
A'\ar[r]^{f'} & B'\ar[r]^{g'} & C'\ar[r]^{h'} & \Sigma A'. }
$$ If $a$ and $b$ are isomorphisms, so is $c$.
\end{lem}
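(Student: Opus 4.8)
The plan is to deduce that $c$ is an isomorphism from the assumption that $a$ and $b$ are, by passing to representable functors and invoking the classical five lemma for abelian groups. Concretely, I would show that for every object $X\in\mathcal{C}$ the map
\[
c^{*}=\mathcal{C}(c,X)\colon \mathcal{C}(C',X)\longrightarrow\mathcal{C}(C,X)
\]
is bijective; taking $X=C$ and $X=C'$ then produces a two-sided inverse of $c$ in the usual Yoneda fashion (surjectivity at $X=C$ gives $d\colon C'\to C$ with $dc=1_{C}$, and injectivity at $X=C'$ upgrades this to $cd=1_{C'}$).

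The key fact that makes the representable functor exact is the preceding lemma, part (1): in any right triangle the second map is a pseudocokernel of the first and the third is a pseudocokernel of the second. Since a pseudocokernel is precisely a weak cokernel, $g\circ f=0$ (so the image is contained in the kernel) and the weak universal property gives the reverse inclusion; hence applying the contravariant functor $\mathcal{C}(-,X)$ turns each such pseudocokernel relation into an exact sequence of abelian groups. Thus from the top triangle, after one rotation by (RTR2) so that $\Sigma A\xrightarrow{-\Sigma f}\Sigma B$ becomes available, I obtain
\[
\mathcal{C}(\Sigma B,X)\to\mathcal{C}(\Sigma A,X)\xrightarrow{h^{*}}\mathcal{C}(C,X)\xrightarrow{g^{*}}\mathcal{C}(B,X)\xrightarrow{f^{*}}\mathcal{C}(A,X),
\]
exact at the three internal terms, and likewise for the bottom triangle. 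The morphism of right triangles $(a,b,c)$ assembles these two sequences into a commutative ladder; contravariance reverses the vertical arrows, so they now run from the primed row to the unprimed one.

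Four of the five vertical maps are then isomorphisms: $a^{*}$ and $b^{*}$ because $a,b$ are isomorphisms, and $(\Sigma a)^{*},(\Sigma b)^{*}$ because the functor $\Sigma$ carries isomorphisms to isomorphisms. The middle map is exactly $c^{*}$. The five lemma (using that the outermost left map is epi and the outermost right map is mono, both automatic from being isomorphisms) forces $c^{*}$ to be an isomorphism, and since $X$ was arbitrary I conclude that $c$ is an isomorphism as explained above.

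The only genuinely delicate point is the exactness of the functorized rows: we have only weak cokernels at our disposal rather than honest cokernels, so the argument must rest on the preceding lemma rather than on any abelian-category intuition, and I must rotate once via (RTR2) to realize $-\Sigma f$ as a pseudocokernel of $h$, which is what supplies exactness at the node $\mathcal{C}(\Sigma A,X)$. It is worth noting that this argument never uses that $\Sigma$ is fully faithful: the covariant functor $\mathcal{C}(X,-)$ would instead require pseudokernels, hence part (2) of the preceding lemma, so working on the contravariant side is what keeps the hypotheses minimal.
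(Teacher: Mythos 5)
Your proof is correct: the contravariant Hom-functor argument works because Lemma 2.4(1) (the pseudocokernel statement) together with one rotation via (RTR2) gives exactness of both functorized rows at the three internal nodes, and the five lemma plus the Yoneda-style conclusion then yields that $c$ is an isomorphism, with no appeal to full faithfulness of $\Sigma$. The paper itself offers no proof, citing \cite[Proposition 2.13]{[LZ]} instead, and your argument is precisely the standard one used there, so there is nothing to reconcile.
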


\begin{defn} (\cite[Definition 3.1]{[N]})
Let $(\mathcal{C}, \Sigma, \rhd)$ be a pseudo-right triangulated category, and $f\in\mathcal{C}(A,B)$.

(1) $f$ is called $\Sigma$-$null$ if it factors through some object in $\Sigma\mathcal{C}$.

(2) $f$ is called $\Sigma$-$epic$ if for any $b\in\mathcal{C}(B,B')$, $bf=0$ implies $b$ is $\Sigma$-null.
\end{defn}

For a pseudo-left triangulated category $(\mathcal{C}, \Omega, \lhd)$, we can define $\Omega$-null morphisms and $\Omega$-monic morphisms dually.

\begin{defn}(\cite[Definition 3.3]{[N]}) \
The sextuple $(\mathcal{C},\Sigma,\Omega,\rhd,\lhd,\psi)$ is called a $pseudo$-$triangulated\ category$ if $(\mathcal{C},\Sigma,\rhd)$ is a pseudo-right triangulated category, $(\mathcal{C},\Omega,\lhd)$ is a pseudo-left triangulated category, $(\Omega,\Sigma)$ is an adjoint pair with the adjugant $\psi:\hom (\Omega C, A)\xrightarrow{\sim}\hom (C,\Sigma A)$, which satisfy the following gluing conditions (G1) and (G2):

(G1) If $g\in\mathcal{C}(B,C)$ is $\Sigma$-epic, and $\Omega C\xrightarrow{e}A\xrightarrow{f}B\xrightarrow{g}C\in\lhd$, then $A\xrightarrow{f}B\xrightarrow{g}C\xrightarrow{-\psi(e)}\Sigma A\in\triangleright$.

(G2) If $f\in\mathcal{C}(A,B)$ is $\Omega$-monic, and $A\xrightarrow{f}B\xrightarrow{g}C\xrightarrow{h}\Sigma A\in\rhd$, then $\Omega C\xrightarrow{-\psi^{-1}(h)}A\xrightarrow{f}B\xrightarrow{g}C\in\triangleleft$.
\end{defn}
\begin{example}(\cite[Example 3.4]{[N]}) \
Let $(\mathcal{C},\Sigma,\Omega,\rhd,\lhd,\psi)$ be a pseudo-triangulated category.

(1) $\mathcal{C}$ is an abelian category if and only if $\Sigma=\Omega=0$.

(2) $\mathcal{C}$ is a triangulated category if and only if $\Sigma$ is the quasi-inverse of $\Omega$.
\end{example}

\begin{defn}(\cite[Definition 4.1]{[N]})
Let $(\mathcal{C},\Sigma,\Omega,\rhd,\lhd,\psi)$ be a pseudo-triangulated category. A sequence in $\mathcal{C}$
$$\Omega C\xrightarrow{e}A\xrightarrow{f}B\xrightarrow{g}C\xrightarrow{h}\Sigma A$$ is called an $extension$ if $A\xrightarrow{f}B\xrightarrow{g}C\xrightarrow{h}\Sigma A\in\rhd$, $\Omega C\xrightarrow{e}A\xrightarrow{f}B\xrightarrow{g}C\in\lhd$, and $h=-\psi(e)$.
\end{defn}

A morphism of extensions from $\Omega C\xrightarrow{e}A\xrightarrow{f}B\xrightarrow{g}C\xrightarrow{h}\Sigma A$ to $\Omega C'\xrightarrow{e'}A'\xrightarrow{f'}B'\xrightarrow{g'}C'\xrightarrow{h'}\Sigma A'$ is a triple $(a,b,c)$ such that the following diagram is commutative $$\xymatrix{
\Omega C\ar[r]^{e}\ar[d]^{\Omega c} & A\ar[r]^{f}\ar[d]^{a} & B\ar[r]^{g}\ar[d]^{b} &
C\ar[r]^{h}\ar[d]^{c} & \Sigma A\ar[d]^{\Sigma a} \\
\Omega C'\ar[r]^{e'} & A'\ar[r]^{f'} & B'\ar[r]^{g'} & C'\ar[r]^{h'} & \Sigma A'. }
$$ Note that $ae=e'\cdot \Omega c$ is equivalent to $\Sigma a\cdot h=h'c$, thus a morphism of extensions is the same as a morphism in $\rhd$ or $\lhd$.

\begin{example} (cf. \cite[Proposition 4.6]{[N]}) \
Let $\mathcal{C}$ be a pseudo-triangulated category.

(1) For any $A,B\in\mathcal{C}$, $\Omega B\xrightarrow{0}A\xrightarrow{i_A}A\oplus B\xrightarrow{p_B}B\xrightarrow{0}\Sigma A$ is an extension, where $i_A$ and $p_B$ are the injection and projection respectively.

(2) If $\mathcal{C}$ is abelian, then an extension is nothing other than a short exact sequence.

(3) If $\mathcal{C}$ is a triangulated category, then an extension is nothing other than a distinguished triangle.
\end{example}

The following lemma  will be frequently used in the next section.

\begin{lem}\label{2d2} (\cite[Proposition 4.7]{[N]}) \
Let $\Omega C\xrightarrow{e}A\xrightarrow{f}B\xrightarrow{g}C\xrightarrow{h}\Sigma A$,
 $\Omega B'\xrightarrow{k}A\xrightarrow{l}M\xrightarrow{m}B'\xrightarrow{n}\Sigma A$ and
$\Omega B\xrightarrow{k'}A'\xrightarrow{l'}M\xrightarrow{m'}B\xrightarrow{n'}\Sigma A'$
be extensions satisfying $m'l=f$. Then there exist $g'\in\mathcal{C}(B',C)$ and $h'\in\mathcal{C}(C,\Sigma A')$ such that the following diagram is commutative and the fourth column is an extension.
$$\xymatrix{
& & \Omega B\ar[r]^{\Omega g}\ar[d]^{k'} & \Omega C\ar[d]^{-\psi^{-1}(h')} \\
& & A'\ar@{=}[r]\ar[d]^{l'} & A'\ar[d]^{f'}\\
\Omega B'\ar[r]^{k}\ar[d] & A\ar[r]^{l}\ar@{=}[d] & M \ar[r]^{m}\ar[d]^{m'} &
 B'\ar[r]^{n}\ar@{-->}[d]^{g'} & \Sigma A \ar@{=}[d]\\
\Omega C \ar[r]^{e} & A\ar[r]^{f} & B\ar[r]^{g}\ar[d]^{n'} &
C\ar[r]^{h}\ar@{-->}[d]^{h'} & \Sigma A \ar[d]^{\Sigma l}\\
& & \Sigma A'\ar@{=}[r] & \Sigma A' \ar[r]^{-\Sigma l'} & \Sigma M} $$
\end{lem}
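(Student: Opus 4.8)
The plan is to build the fourth column in two halves: first I would produce its right-triangle part, together with $g'$ and $h'$, by feeding the right-triangle parts of the three given extensions into the axiom (RTR4), and then upgrade that right triangle to a genuine extension by invoking the gluing axiom (G2). Concretely, the right-triangle parts of the three extensions are $A\xrightarrow{f}B\xrightarrow{g}C\xrightarrow{h}\Sigma A$, $A\xrightarrow{l}M\xrightarrow{m}B'\xrightarrow{n}\Sigma A$ and $A'\xrightarrow{l'}M\xrightarrow{m'}B\xrightarrow{n'}\Sigma A'$, and the hypothesis $m'l=f$ is exactly the input that (RTR4) requires. Applying (RTR4) yields $g'\in\mathcal{C}(B',C)$ and $h'\in\mathcal{C}(C,\Sigma A')$ with $f'=ml'$ such that the lower-right block of the diagram commutes and $A'\xrightarrow{f'}B'\xrightarrow{g'}C\xrightarrow{h'}\Sigma A'\in\rhd$; in particular it supplies the identities $g'm=gm'$, $n=hg'$, $n'=h'g$ and $(\Sigma l)h=-(\Sigma l')h'$. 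Since $-\psi(-\psi^{-1}(h'))=h'$ holds automatically, the fourth column will be an extension as soon as we know that $\Omega C\xrightarrow{-\psi^{-1}(h')}A'\xrightarrow{f'}B'\xrightarrow{g'}C\in\lhd$, and by (G2) this follows once $f'=ml'$ is shown to be $\Omega$-monic.

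The heart of the argument, and the step I expect to be the main obstacle, is therefore to prove that $f'=ml'$ is $\Omega$-monic; I would do this by a diagram chase using the left-triangle parts of the three extensions. Throughout I use the left-triangulated dual of the pseudokernel lemma: in a left triangle $\Omega C\xrightarrow{e}A\xrightarrow{f}B\xrightarrow{g}C$ the map $f$ is a pseudokernel of $g$ and $e$ is a pseudokernel of $f$, so in particular the second map of any left triangle is $\Omega$-monic, because any morphism it kills factors through the domain of the preceding map, which lies in $\Omega\mathcal{C}$. Thus $f$ and $l'$ are $\Omega$-monic, while $l$, $e$ and $l'$ are pseudokernels of $m$, $f$ and $m'$ respectively. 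Now suppose $f'a=ml'a=0$ for some $a\colon X\to A'$. Since $l$ is a pseudokernel of $m$ one may write $l'a=lc$; applying $m'$ and using $m'l'=0$ and $m'l=f$ gives $fc=0$, so $c=ed$ because $e$ is a pseudokernel of $f$. Then $l'a=(le)d$, and since $m'(le)=(m'l)e=fe=0$ and $l'$ is a pseudokernel of $m'$, one obtains $le=l'w$ for some $w$. Hence $l'(a-wd)=0$, so $a-wd$ is $\Omega$-null by $\Omega$-monicity of $l'$, while $wd$ is $\Omega$-null because it factors through $\Omega C$; as $\Omega$ is additive, their sum $a$ is $\Omega$-null, which proves $f'$ is $\Omega$-monic.

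With $f'$ in hand, (G2) applied to the right triangle $A'\xrightarrow{f'}B'\xrightarrow{g'}C\xrightarrow{h'}\Sigma A'$ produces the left triangle $\Omega C\xrightarrow{-\psi^{-1}(h')}A'\xrightarrow{f'}B'\xrightarrow{g'}C\in\lhd$, and together with the right triangle and the automatic glue relation this exhibits the fourth column as an extension. It would then remain only to check the two commutative squares in the upper-left corner, namely $k=e\cdot\Omega g'$ and $k'=-\psi^{-1}(h')\cdot\Omega g$, and these I would deduce from the naturality of the adjugant $\psi$: combining $n=hg'$ with the glue relations $n=-\psi(k)$ and $h=-\psi(e)$ gives $\psi(k)=\psi(e\cdot\Omega g')$, whence $k=e\cdot\Omega g'$ since $\psi$ is bijective, and the second identity follows in the same way from $n'=h'g$ and $n'=-\psi(k')$. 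This would complete the construction.
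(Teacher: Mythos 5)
Your proof is correct, but there is nothing in the paper to compare it against: the authors do not prove this lemma at all, they quote it as \cite[Proposition 4.7]{[N]}. Taken as a self-contained argument, yours is sound and uses only material available in Section 2. Feeding the right-triangle parts of the three extensions into (RTR4) is legitimate (the hypothesis $m'l=f$ is exactly the compatibility that axiom requires), and it yields $g'$, $h'$, $f'=ml'$, the commutativities $g'm=gm'$, $n=hg'$, $n'=h'g$, $\Sigma l\cdot h=-\Sigma l'\cdot h'$, and the right triangle $A'\xrightarrow{f'}B'\xrightarrow{g'}C\xrightarrow{h'}\Sigma A'$. The real content is your middle paragraph, and the pseudokernel chase there checks out: the facts that $l$, $e$, $l'$ are pseudokernels of $m$, $f$, $m'$ and that the second map of a left triangle is $\Omega$-monic are the left-triangle duals of \cite[Lemma 1.3]{[ABM]}; the hypothesis $m'l=f$ enters exactly where it must (to convert $m(l'a)=0$ into $fc=0$); and the closing step, that a sum of two $\Omega$-null morphisms is $\Omega$-null, does indeed follow from additivity of $\Omega$, since the two factorizations combine into one through $\Omega Z_1\oplus\Omega C\cong\Omega(Z_1\oplus C)$. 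Your reading of the unlabelled vertical arrow as $\Omega g'$ is the intended one, and the $\psi$-naturality computations for the two upper squares, $\psi(e\cdot\Omega g')=\psi(e)\,g'=-hg'=-n=\psi(k)$ and its analogue $\psi(-\psi^{-1}(h')\cdot\Omega g)=-h'g=-n'=\psi(k')$, are exactly what is needed. One streamlining worth noting: once $f'$ is known to be $\Omega$-monic, your passage from the right triangle to the extension (the application of (G2) together with the observation $-\psi(-\psi^{-1}(h'))=h'$) is precisely the implication (a)$\Rightarrow$(c) of Lemma \ref{2d3}(2), whose proof is independent of the present lemma, so you could invoke that equivalence instead of rederiving it.
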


In the rest of this section, we give some properties on $\Sigma$-epic morphisms and $\Omega$-monic morphisms in a pseudo-triangulated category $\mathcal{C}$.

\begin{lem} \label{2d3}
 (1)Let $\Omega C\xrightarrow{e}A\xrightarrow{f}B\xrightarrow{g}C$ be a left triangle. Then the following statements are equivalent.

(a) $g$ is $\Sigma$-epic.

(b) $A\xrightarrow{f}B\xrightarrow{g}C\xrightarrow{-\psi(e)}\Sigma A$ is a right triangle.

(c) $\Omega C\xrightarrow{e}A\xrightarrow{f}B\xrightarrow{g}C\xrightarrow{-\psi(e)}\Sigma A$ is an extension.

(2) Let
$A\xrightarrow{f}B\xrightarrow{g}C\xrightarrow{h}\Sigma A$ be a right triangle. Then the following statements are equivalent.

(a) $f$ is $\Omega$-monic.

(b) $\Omega C\xrightarrow{-\psi^{-1}(h)}A\xrightarrow{f}B\xrightarrow{g}C$ is a left triangle.

(c) $\Omega C\xrightarrow{-\psi^{-1}(h)}A\xrightarrow{f}B\xrightarrow{g}C\xrightarrow{h}\Sigma A$ is an extension.

\end{lem}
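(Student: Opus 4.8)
The plan is to establish part (1) in full and obtain part (2) by passing to the formal dual in the pseudo-left triangulated structure $(\mathcal{C},\Omega,\lhd)$. Throughout I use that the adjugant $\psi\colon\hom(\Omega C,A)\xrightarrow{\sim}\hom(C,\Sigma A)$ is an additive isomorphism, so that $-\psi(e)=\psi(-e)$ and likewise for $\psi^{-1}$; this reduces all the occurrences of the equation $h=-\psi(e)$ to routine sign bookkeeping. In particular, in part (2) one has $-\psi\bigl(-\psi^{-1}(h)\bigr)=h$, so the chosen first map $e=-\psi^{-1}(h)$ is compatible with the extension requirement $h=-\psi(e)$.

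For (1), the equivalence (b) $\Leftrightarrow$ (c) is immediate from the definition of an extension. We are given that $\Omega C\xrightarrow{e}A\xrightarrow{f}B\xrightarrow{g}C$ is a left triangle and the fourth map is prescribed to be $-\psi(e)$; hence the only condition still needed for the five-term sequence to be an extension is precisely that $A\xrightarrow{f}B\xrightarrow{g}C\xrightarrow{-\psi(e)}\Sigma A$ lie in $\rhd$, which is statement (b). The implication (a) $\Rightarrow$ (b) is then nothing but the gluing axiom (G1): the hypothesis that $g$ is $\Sigma$-epic together with the given left triangle yields exactly that $A\xrightarrow{f}B\xrightarrow{g}C\xrightarrow{-\psi(e)}\Sigma A$ is a right triangle.

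For the converse (b) $\Rightarrow$ (a), I invoke the pseudocokernel property of the last map of a right triangle (recalled above): in the right triangle of (b), the morphism $-\psi(e)$ is a pseudocokernel of $g$. Thus for any $b\in\mathcal{C}(C,C')$ with $bg=0$ there is a factorization $b=b'\cdot(-\psi(e))$ through $\Sigma A\in\Sigma\mathcal{C}$, which exhibits $b$ as $\Sigma$-null; hence $g$ is $\Sigma$-epic, completing (1). Part (2) is the formal dual: (b) $\Leftrightarrow$ (c) follows as above once one checks $-\psi(-\psi^{-1}(h))=h$; (a) $\Rightarrow$ (b) is the gluing axiom (G2); and (b) $\Rightarrow$ (a) uses the dual pseudokernel property, namely that in a left triangle the map $e=-\psi^{-1}(h)$ is a pseudokernel of $f$, so that any $a$ with $fa=0$ factors through $\Omega C\in\Omega\mathcal{C}$ and is therefore $\Omega$-null. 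I anticipate no genuine obstacle here; the only points demanding attention are selecting the correct gluing axiom in each direction and matching the signs introduced by $\psi$, while the substantive input is merely the pseudocokernel/pseudokernel characterization of the third map of a right/left triangle.
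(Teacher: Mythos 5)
Your proof is correct and takes essentially the same route as the paper: (b)$\Leftrightarrow$(c) by the definition of extension, (a)$\Rightarrow$(b) by the gluing axiom (G1) (resp.\ (G2)), and (b)$\Rightarrow$(a) by factoring any $b$ with $bg=0$ through $-\psi(e)$, hence through $\Sigma A$ --- the paper phrases this last step by first rotating the triangle via (RTR2) and citing the definition, but the content is identical, and it likewise proves only part (1), leaving (2) to duality.
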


\begin{proof}
We only show (1). (a)$\Rightarrow$ (b) follows from gluing condition (G1). By definition of extension we get (b)$\Leftrightarrow$(c). It remains to show (b)$\Rightarrow$ (a). Since $B\xrightarrow{g}C\xrightarrow{-\psi(e)}\Sigma A\xrightarrow{-\Sigma f}\Sigma B$ is a right triangle, $g$ is $\Sigma$-epic by definition.
\end{proof}

\begin{lem} \label{3d5}
 The following statements are equivalent.

(1) $f:A\rightarrow B$ is $\Sigma$-epic.

(2) For any right triangle $A\xrightarrow{f}B\xrightarrow{g}C\xrightarrow{h}\Sigma A$, there exists an object $C'\in\mathcal{C}$ such that $C\cong \Sigma C'$.

(3) There exists a right triangle $A\xrightarrow{f}B\xrightarrow{g}\Sigma C'\xrightarrow h\Sigma A$.
\end{lem}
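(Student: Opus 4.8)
The plan is to prove the cycle $(1)\Rightarrow(3)\Rightarrow(2)\Rightarrow(1)$, so that only one implication carries real content.

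Two of the three steps are routine. For $(3)\Rightarrow(2)$, I would start from the right triangle $A\xrightarrow{f}B\xrightarrow{g_0}\Sigma C'\xrightarrow{h_0}\Sigma A$ provided by (3) together with an arbitrary right triangle $A\xrightarrow{f}B\xrightarrow{g}C\xrightarrow{h}\Sigma A$ on the same $f$; applying (RTR3) to the pair $(1_A,1_B)$ yields a morphism of right triangles whose third component is some $c\colon C\to\Sigma C'$, and Lemma \ref{2d4} then forces $c$ to be an isomorphism, giving $C\cong\Sigma C'$. For $(2)\Rightarrow(1)$, I would choose any right triangle $A\xrightarrow{f}B\xrightarrow{g}C\xrightarrow{h}\Sigma A$ by (RTR1), so that $C\cong\Sigma C'$ by hypothesis; if $bf=0$ then, since $g$ is a pseudocokernel of $f$, $b$ factors through $g$ and hence through $C\cong\Sigma C'\in\Sigma\mathcal{C}$, so $b$ is $\Sigma$-null and $f$ is $\Sigma$-epic.

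The substance lies in $(1)\Rightarrow(3)$: from a $\Sigma$-epic $f$ I must exhibit a right triangle on $f$ whose third term is honestly an object of $\Sigma\mathcal{C}$, and I expect this to be the main obstacle. The naive route is to take any triangle $A\xrightarrow{f}B\xrightarrow{g}C\xrightarrow{h}\Sigma A$, note $gf=0$ and deduce from $\Sigma$-epicness that $g$ factors as $B\xrightarrow{\alpha}\Sigma C'\xrightarrow{\beta}C$ through $\Sigma\mathcal{C}$, and then try to promote this factorization to an isomorphism $C\cong\Sigma C'$. This appears to require an octahedral comparison in which $\beta$ occurs as a \emph{second} map of a right triangle, i.e.\ a backward rotation, which is unavailable in a merely pseudo-right triangulated category; so I would abandon the right-sided approach and use the left-triangulated half of the structure instead.

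Concretely, I would realize $f$ as the terminal map of a left triangle: by the dual of (RTR1) there is a left triangle $\Omega B\xrightarrow{e}K\xrightarrow{u}A\xrightarrow{f}B$. Reading Lemma \ref{2d3}(1) with this left triangle, its last map being the $\Sigma$-epic morphism $f$, the implication (a)$\Rightarrow$(b) upgrades it to a right triangle $K\xrightarrow{u}A\xrightarrow{f}B\xrightarrow{-\psi(e)}\Sigma K$. A single rotation by (RTR2) then gives the right triangle $A\xrightarrow{f}B\xrightarrow{-\psi(e)}\Sigma K\xrightarrow{-\Sigma u}\Sigma A$, whose third term is $\Sigma K$; taking $C'=K$ is exactly statement (3). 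The guiding idea is that $\Sigma$-epicness of $f$ is precisely the hypothesis Lemma \ref{2d3} needs in order to glue a left triangle ending in $f$ into a right triangle, after which one rotation slides the suspended term $\Sigma K$ into the third position.
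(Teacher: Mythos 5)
Your proposal is correct and takes essentially the same route as the paper: its proof of (1)$\Rightarrow$(2) is exactly your (1)$\Rightarrow$(3) (a left triangle ending in $f$, upgraded by Lemma \ref{2d3} and rotated by (RTR2)) followed by your (3)$\Rightarrow$(2) (the (RTR3)--Lemma \ref{2d4} comparison), with the remaining implications dismissed as clear. Your only difference is organizational---running the cycle (1)$\Rightarrow$(3)$\Rightarrow$(2)$\Rightarrow$(1) and writing out the pseudocokernel argument for (2)$\Rightarrow$(1) that the paper leaves implicit.
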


\begin{proof}
(1)$\Rightarrow$ (2). Let $\Omega B\xrightarrow d C'\xrightarrow e A\xrightarrow{f}B$ be a left triangle. Since $f$ is $\Sigma$-epic,
$C'\xrightarrow e A\xrightarrow{f}B\xrightarrow{-\psi(d)}\Sigma C'$ is a right triangle. By (RTR2),
$A\xrightarrow{f}B\xrightarrow{-\psi(d)}\Sigma C'\xrightarrow{-\Sigma e}\Sigma A$ is a right triangle. So $C\cong \Sigma C'$ by Lemma \ref{2d4}.
(2)$\Rightarrow$ (3) and (3)$\Rightarrow$ (1) are clear.
\end{proof}

\begin{lem} \label{3d7}
Let $f:A\rightarrow B$, $g:B\rightarrow C$ and $h:A\rightarrow C$ be morphisms in $\mathcal{C}$ such that $h=gf$.

(1) If $h$ is $\Sigma$-epic, then so is $g$;

(2) If $h$ is $\Omega$-monic, then so is $f$;

(3) If $f$ and $g$ are $\Sigma$-epic, then so is $h$.

\end{lem}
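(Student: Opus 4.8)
The statements (1) and (2) should fall straight out of the definitions by a one-line composition chase, so I would dispose of them first. For (1), assuming $h$ is $\Sigma$-epic, I would take any $b\in\mathcal{C}(C,C')$ with $bg=0$; then $bh=b(gf)=(bg)f=0$, so $b$ is $\Sigma$-null because $h$ is $\Sigma$-epic, and hence $g$ is $\Sigma$-epic. Statement (2) is the formal dual: assuming $h$ is $\Omega$-monic and taking $a\in\mathcal{C}(A',A)$ with $fa=0$, one gets $ha=(gf)a=g(fa)=0$, so $a$ is $\Omega$-null and $f$ is $\Omega$-monic.

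The real content is (3), and the plan is to show that a cone of $h$ lies in $\Sigma\mathcal{C}$, whence $h$ is $\Sigma$-epic by Lemma \ref{3d5}. First I would record an elementary but decisive observation: \emph{any morphism whose codomain lies in $\Sigma\mathcal{C}$ is automatically $\Sigma$-epic}. Indeed, every morphism out of an object $W\in\Sigma\mathcal{C}$ factors through $W$ (via the identity) and is therefore $\Sigma$-null; so for such a morphism the defining condition of $\Sigma$-epic holds vacuously.

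To produce a cone of $h$ I would build an octahedron on the factorization $h=gf$ using (RTR4). This requires three right triangles: the cone triangle $A\xrightarrow{h}C\to \mathrm{cone}(h)\to\Sigma A$ of $h$; the cone triangle $A\xrightarrow{f}B\to \mathrm{cone}(f)\to\Sigma A$ of $f$, in which $\mathrm{cone}(f)\in\Sigma\mathcal{C}$ because $f$ is $\Sigma$-epic (Lemma \ref{3d5}); and a right triangle of the form $A'\xrightarrow{l'}B\xrightarrow{g}C\xrightarrow{n'}\Sigma A'$ having $g$ as its \emph{second} morphism. Since the factorization condition $gf=h$ holds, (RTR4) then supplies a right triangle in the third column of the shape $A'\xrightarrow{f'}\mathrm{cone}(f)\xrightarrow{g'}\mathrm{cone}(h)\to\Sigma A'$. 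Here $f'$ has codomain $\mathrm{cone}(f)\in\Sigma\mathcal{C}$, so $f'$ is $\Sigma$-epic by the observation above, and therefore $\mathrm{cone}(h)\in\Sigma\mathcal{C}$ again by Lemma \ref{3d5}. Feeding this back through Lemma \ref{3d5} gives that $h$ is $\Sigma$-epic.

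The hard part will be producing the third input triangle, because in a pseudo-triangulated category one cannot freely rotate a triangle backwards to display $g$ as a second morphism: $\Sigma$ need not be invertible. I would resolve this by exploiting that $g$ is $\Sigma$-epic. Completing $g$ to a left triangle $\Omega C\xrightarrow{e}A'\xrightarrow{l'}B\xrightarrow{g}C$ (the left-triangulated analogue of (RTR1)) and then invoking Lemma \ref{2d3}(1) — equivalently the gluing condition (G1) — shows that $A'\xrightarrow{l'}B\xrightarrow{g}C\xrightarrow{-\psi(e)}\Sigma A'$ is a right triangle, which is exactly the required shape with $n'=-\psi(e)$. With all three triangles in place the octahedron applies, and the cone computation above completes the proof.
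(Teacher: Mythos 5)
Your proof is correct and follows essentially the same route as the paper's: for (3) the paper likewise applies (RTR4) to the same three input triangles --- a cone triangle of $h$, a cone triangle of $f$, and a right triangle $L\xrightarrow{f'}B\xrightarrow{g}C\xrightarrow{h'}\Sigma L$ with $g$ in second position obtained from the $\Sigma$-epicness of $g$ via Lemma \ref{2d3} --- and then uses the $\Sigma$-epicness of $f$ (through Lemma \ref{3d5}) to identify the cone of $h$ with an object of $\Sigma\mathcal{C}$. The only cosmetic differences are that you prove (1) and (2) directly where the paper cites \cite[Lemma 4.4]{[N]}, and that your endgame invokes Lemma \ref{3d5} (1)$\Rightarrow$(2) on the output triangle of (RTR4), whereas the paper re-derives that same fact by hand by composing with the isomorphism $M\cong\Sigma M'$ and comparing triangles via (RTR3) and Lemma \ref{2d4}.
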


\begin{proof} For (1) and (2) we see \cite[Lemma 4.4]{[N]}. Now prove (3).
Since $g:B\rightarrow C$ is $\Sigma$-epic, there exists a right triangle $L\xrightarrow{f'}B\xrightarrow{g} C\xrightarrow{h'}\Sigma L$ by Lemma \ref{2d3}.
By (RTR4), we get the following commutative diagram
$$\xymatrix{
 & L\ar@{=}[r]\ar[d]^{f'} & L\ar[d]^{a}\\
 A\ar[r]^{f}\ar@{=}[d] & B \ar[r]^{g'}\ar[d]^{g} &
 M\ar[r]^{h''}\ar[d]^{b} & \Sigma A \ar@{=}[d]\\
 A\ar[r]^{h} & C\ar[r]^{b'}\ar[d]^{h'} &
N\ar[r]^{c'}\ar[d]^{c} & \Sigma A \\
 & \Sigma L\ar@{=}[r] & \Sigma L} $$
 where the middle two columns and middle two rows are right triangles.
 Since $f:A\rightarrow B$ is $\Sigma$-epic, there exits an isomorphism $m:M\xrightarrow{\sim} \Sigma M'$ with $M'\in\mathcal{C}$ by Lemma \ref{3d5}. Thus $ma:L\rightarrow\Sigma M'$ is  $\Sigma$-epic by definition, and there exists a right triangle $L\xrightarrow{ma} \Sigma M'\xrightarrow{m'} \Sigma N'\xrightarrow{n'}\Sigma L$ by Lemma \ref{3d5}. By (RTR3) and Lemma \ref{2d4} there exists an isomorphism  $n:N\xrightarrow{\sim}\Sigma N'$ such that the following diagram  is commutative.
 $$\xymatrix{
L\ar[r]^{a}\ar@{=}[d] & M \ar[r]^{b}\ar[d]^{m} &
N\ar[r]^{c}\ar[d]^{n} & \Sigma L \ar@{=}[d]\\
 L\ar[r]^{ma} & \Sigma M'\ar[r]^{m'} &
\Sigma N'\ar[r]^{n'} & \Sigma L} $$
 According to Lemma \ref{3d5} again, $h$ is $\Sigma$-epic.
\end{proof}

\section{Main result}

Let $\mathcal{D}$ be a subcategory of an additive category $\mathcal{C}$. A morphism $f:A\rightarrow B$ in $\mathcal{C}$
is called $\mathcal{D}$-epic, if for any $D\in\mathcal{D}$, the sequence $\mathcal{C}(D,A)\xrightarrow{\mathcal{C}(D,f)}\mathcal{C}(D,B)\rightarrow 0$
is exact. A right $\mathcal{D}$-approximation of $X$ in
$\mathcal{C}$ is a $\mathcal{D}$-epic map $f: D\rightarrow X$, with $D$ in
$\mathcal{D}$. If for any object $X\in\mathcal{C}$, there exists a right
$\mathcal{D}$-approximation $f:D\rightarrow X$, then $\mathcal{D}$
is called a contravariantly finite subcategory of $\mathcal{C}$.
Dually we have the notions of $\mathcal{D}$-monic map, left $\mathcal{D}$-approximation and
covariantly finite subcategory. The subcategory $\mathcal{D}$ is called functorially
finite if $\mathcal{D}$ is both contravariantly finite and covariantly
finite.

\begin{defn}
 Let $\mathcal{D}$ and $\mathcal{Z}$ be two subcategories of a pseudo-triangulated category $\mathcal{C}$, and $\mathcal{D}\subseteq\mathcal{Z}$. The pair $(\mathcal{Z},\mathcal{Z})$ is called a $\mathcal{D}$-$mutation\ pair$ if it satisfies

(1) For any $X\in\mathcal{Z}$, there exists an extension $\Omega Y\xrightarrow{e}X\xrightarrow{f}D\xrightarrow{g}Y\xrightarrow{h}\Sigma X$ such that  $Y\in\mathcal{Z}$, $f$ is a left $\mathcal{D}$-approximation and $g$ is a right $\mathcal{D}$-approximation.

(2) For any $Y\in\mathcal{Z}$, there exists an extension $\Omega Y\xrightarrow{e}X\xrightarrow{f}D\xrightarrow{g}Y\xrightarrow{h}\Sigma X$ such that $X\in\mathcal{Z}$, $f$ is a left $\mathcal{D}$-approximation and $g$ is a right $\mathcal{D}$-approximation.
\end{defn}

\begin{defn}
Let $\mathcal{C}$ be a pseudo-triangulated category. A subcategory $\mathcal{Z}$ of $\mathcal{C}$ is called $extension$-$closed$ if for any extension in $\mathcal{C}$ $$\Omega Z\xrightarrow{e}X\xrightarrow{f}Y\xrightarrow{g}Z\xrightarrow{h}\Sigma X,$$ $X,Z\in\mathcal{Z}$ implies $Y\in\mathcal{Z}$.
\end{defn}

Let $\Omega Z\xrightarrow{e}X\xrightarrow{f}Y\xrightarrow{g}Z\xrightarrow{h}\Sigma X$ be an extension in $\mathcal{C}$ and $X,Y,Z\in\mathcal{Z}$, we simply say the extension is in $\mathcal{Z}$.

\begin{ass}\label{a1}
Assume that $\mathcal{C}$ is a  pseudo-triangulated category satisfying the following conditions:

Let $\Omega C\xrightarrow{e}A\xrightarrow{f}B\xrightarrow{g}C\xrightarrow{h}\Sigma A$ and $\Omega C'\xrightarrow{e'}A'\xrightarrow{f'}B'\xrightarrow{g'}C'\xrightarrow{h'}\Sigma A'$ be any two extensions in  $\mathcal{C}$.

(A1) If $c\in\mathcal{C}(C,C')$ satisfies $h'c=0$ and $cg=0$, then there exists $c'\in\mathcal{C}(C,B')$ such that $g'c'=c$.

(A2) If $a\in\mathcal{C}(A,A')$ satisfies $f'a=0$ and $ae=0$, then there exists $a'\in\mathcal{C}(B,A')$ such that $a'f=a$.


\end{ass}

\begin{rem}
If $\mathcal{C}$ is  a  triangulated category, it is easy to see that Assumption \ref{a1} is trivial. If $\mathcal{C}$ is an abelian category, then $g$ is epic so that $cg=0$ implies that $c=0$, thus we can take $c'=0$ in (A1). Therefore, Assumption \ref{a1} is trivial for both triangulated category and abelian category.
\end{rem}

From now on to the end of this section, we assume that $\mathcal{C}$ is a pseudo-triangulated category satisfying Assumption \ref{a1}, and assume that $(\mathcal{Z},\mathcal{Z})$ is a $\mathcal{D}$-mutation pair.

\begin{lem}\label{3d1}
Let $$\xymatrix{
\Omega Z\ar[r]^{e}\ar[d]^{\Omega z_i} & X\ar[r]^{f}\ar[d]^{x_i} & Y\ar[r]^{g}\ar[d]^{y_i} &
Z\ar[r]^{h}\ar[d]^{z_i} & \Sigma X\ar[d]^{\Sigma x_i \ \ \ \ \ (i=1,2)} \\
\Omega Z'\ar[r]^{e'} & X'\ar[r]^{f'} & D'\ar[r]^{g'} & Z'\ar[r]^{h'} & \Sigma X' }
$$ be morphisms of extensions in $\mathcal{Z}$, where $D'\in\mathcal{D}$ and  $f$ is  $\mathcal{D}$-monic, $i=1,2$. Then in quotient category $\mathcal{Z}/\mathcal{D}$,  $\underline{x_1}=\underline{x_2}$ implies that $\underline{z_1}=\underline{z_2}$.
\end{lem}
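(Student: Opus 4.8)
The plan is to reduce to a single morphism of extensions and then chase the diagram, using the $\mathcal{D}$-monicity of $f$, the pseudocokernel property of the top row, and Assumption~\ref{a1}(A1) to produce an explicit factorization of $z_1-z_2$ through $\mathcal{D}$. First I would set $x=x_1-x_2$, $y=y_1-y_2$ and $z=z_1-z_2$. Since composition is bilinear and $\Sigma,\Omega$ are additive, the triple $(x,y,z)$ is again a morphism of extensions between the two displayed rows, so in particular $f'x=yf$, $g'y=zg$ and $h'z=\Sigma x\cdot h$ hold. The hypothesis $\underline{x_1}=\underline{x_2}$ means exactly that $x$ factors through $\mathcal{D}$, and the goal becomes to show that $z$ factors through $\mathcal{D}$ as well.

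I would write $x=\beta\alpha$ with $\alpha\colon X\to D$, $\beta\colon D\to X'$ and $D\in\mathcal{D}$. Because $f$ is $\mathcal{D}$-monic, the map $\mathcal{C}(Y,D)\to\mathcal{C}(X,D)$ is surjective, so $\alpha$ lifts along $f$ to some $\alpha'\colon Y\to D$ with $\alpha'f=\alpha$; hence $x=uf$ for $u=\beta\alpha'\colon Y\to X'$. Next I would introduce the comparison map $w=y-f'u\colon Y\to D'$. From $f'x=yf$ and $x=uf$ one gets $wf=yf-f'uf=f'x-f'x=0$, so, since $g$ is a pseudocokernel of $f$ (the cited lemma on pseudo-right triangulated categories), $w$ factors as $w=vg$ for some $v\colon Z\to D'$.

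Now set $z'=z-g'v$, a map whose difference from $z$ already factors through $D'\in\mathcal{D}$, so that $\underline{z}=\underline{z'}$ and it suffices to treat $z'$. Using $g'y=zg$, $w=vg$, $w=y-f'u$ and $g'f'=0$ I would compute $z'g=zg-g'vg=g'y-g'w=g'f'u=0$; and using $h'g'=0$, the right square $h'z=\Sigma x\cdot h$, the factorization $x=uf$, and the rotated-triangle identity $\Sigma f\cdot h=0$ coming from (RTR2), I would compute $h'z'=h'z=\Sigma u\cdot(\Sigma f\cdot h)=0$. Thus $z'$ satisfies both $z'g=0$ and $h'z'=0$.

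Finally, Assumption~\ref{a1}(A1), applied to the top and bottom extensions with $c=z'$, produces $c'\colon Z\to D'$ with $g'c'=z'$, so that $z=z'+g'v=g'(c'+v)$ factors through $D'\in\mathcal{D}$ and $\underline{z}=0$, i.e.\ $\underline{z_1}=\underline{z_2}$. The main obstacle is precisely the two vanishings $z'g=0$ and $h'z'=0$ together with their exploitation: in a genuine triangulated or abelian category the factorization of $z'$ through $g'$ would be automatic from $h'z'=0$ alone, whereas here neither the factorization nor the intermediate simplifications are formal, and it is exactly this gap that the pseudocokernel property together with Assumption~\ref{a1}(A1) are engineered to bridge.
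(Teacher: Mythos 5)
Your proof is correct and follows essentially the same route as the paper's: factor $x_1-x_2$ through $\mathcal{D}$, lift the factorization along $f$ using $\mathcal{D}$-monicity, use the pseudocokernel property of $g$ to correct $y_1-y_2$, and then apply (A1) to the corrected map $z_1-z_2-g'v$ to conclude that $z_1-z_2$ factors through $D'$. The only cosmetic difference is that you deduce $h'(z_1-z_2)=0$ from the rotation identity $\Sigma f\cdot h=0$ (via (RTR2) and the pseudocokernel lemma), whereas the paper gets it from the adjunction identity $h=-\psi(e)$ together with $fe=0$; both are valid.
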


\begin{proof}
Since $\underline{x_1}=\underline{x_2}$, there exist $a_1:X\rightarrow D$, $a_2:D\rightarrow X'$ such that $x_1-x_2=a_2a_1$, where $D\in\mathcal{D}$.  Since $f$ is $\mathcal{D}$-monic, there exists $a_3:Y\rightarrow D$ such that $a_1=a_3f$. Thus $(x_1-x_2)e=a_2a_1e=a_2a_3fe=0$. Then $\Sigma (x_1-x_2)\cdot h=-\Sigma(x_1-x_2)(\psi(e))=-\psi((x_1-x_2)e)=0$. Note that $(y_1-y_2-f'a_2a_3)f=(y_1-y_2)f-f'(x_1-x_2)=0$, there exists $d:Z\rightarrow D'$ such that $y_1-y_2-f'a_2a_3=dg$. Now $(z_1-z_2-g'd)g=g'(y_1-y_2)-g'(y_1-y_2-f'a_2a_3)=0$, and $h'(z_1-z_2-g'd)=h'(z_1-z_2)=\Sigma (x_1-x_2)\cdot h=0$. By (A1), there exists $d':Z\rightarrow D'$ such that $z_1-z_2-g'd=g'd'$. So $z_1-z_2=g'(d+d')$ and $\underline{z_1}=\underline{z_2}$.
\end{proof}

\begin{lem}\label{3d2}
Let $\Omega Y\xrightarrow{e}X\xrightarrow{f}D\xrightarrow{g}Y\xrightarrow{h}\Sigma X$ and $\Omega Y'\xrightarrow{e'}X\xrightarrow{f'}D'\xrightarrow{g'}Y'\xrightarrow{h'}\Sigma X$ be extensions in $\mathcal{Z}$, where $f,f'$ are left $\mathcal{D}$-approximations. Then $Y$ and $Y'$ are isomorphic in $\mathcal{Z}/\mathcal{D}$.
\end{lem}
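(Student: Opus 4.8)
The plan is to produce a pair of mutually inverse morphisms between $Y$ and $Y'$ in the quotient $\mathcal{Z}/\mathcal{D}$. First I would exploit the approximation hypotheses: since $f\colon X\to D$ is a left $\mathcal{D}$-approximation and $D'\in\mathcal{D}$, the map $f'\colon X\to D'$ factors as $f'=\alpha f$ for some $\alpha\colon D\to D'$; symmetrically, since $f'$ is a left $\mathcal{D}$-approximation and $D\in\mathcal{D}$, there is $\beta\colon D'\to D$ with $f=\beta f'$. The identity $\alpha f=f'=f'\cdot 1_X$, applied through (RTR3) to the right-triangle parts $X\xrightarrow{f}D\xrightarrow{g}Y\xrightarrow{h}\Sigma X$ and $X\xrightarrow{f'}D'\xrightarrow{g'}Y'\xrightarrow{h'}\Sigma X$, yields $y\colon Y\to Y'$ with $(1_X,\alpha,y)$ a morphism of right triangles; symmetrically $(1_X,\beta,y')$ with $y'\colon Y'\to Y$. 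By the observation (recorded just after the definition of morphism of extensions) that a morphism of right triangles is automatically a morphism of extensions, both triples are morphisms of extensions. I then claim $\underline{y}$ and $\underline{y'}$ are inverse in $\mathcal{Z}/\mathcal{D}$.

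The key step is to show $\underline{y'y}=\underline{1_Y}$ and $\underline{yy'}=\underline{1_{Y'}}$, and for this I would invoke Lemma \ref{3d1}. The composite $(1_X,\beta\alpha,y'y)$ is a self-morphism of the first extension, using $\beta\alpha f=\beta f'=f$, and so is the identity $(1_X,1_D,1_Y)$. The codomain extension here has middle object $D\in\mathcal{D}$, and the leading map $f$ of the source is $\mathcal{D}$-monic because a left $\mathcal{D}$-approximation is $\mathcal{D}$-monic; hence Lemma \ref{3d1} applies. Since the first components of the two comparison morphisms literally coincide ($1_X=1_X$, so certainly $\underline{1_X}=\underline{1_X}$), the lemma forces the last components to agree in the quotient, i.e.\ $\underline{y'y}=\underline{1_Y}$. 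Interchanging the roles of the two extensions — comparing $(1_X,\alpha\beta,yy')$ with $(1_X,1_{D'},1_{Y'})$ as self-morphisms of the second extension, whose middle object $D'$ lies in $\mathcal{D}$ and whose leading map $f'$ is $\mathcal{D}$-monic — gives $\underline{yy'}=\underline{1_{Y'}}$. Therefore $\underline{y}\colon Y\to Y'$ is an isomorphism in $\mathcal{Z}/\mathcal{D}$.

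The only point demanding care — and hence the main obstacle — is verifying that the triples fed into Lemma \ref{3d1} are genuine morphisms of extensions and that the lemma's hypotheses hold. The former is supplied by the identification of morphisms in $\rhd$ with morphisms of extensions, so that the $y$, $y'$ produced by (RTR3) need no separate check on their left-triangle components. The latter reduces to two facts already available: the codomain middle objects $D,D'$ lie in $\mathcal{D}$ by the definition of left $\mathcal{D}$-approximation, and $f,f'$ are $\mathcal{D}$-monic for the same reason. It is worth noting that Assumption \ref{a1} does not enter this argument directly; it is used only implicitly, inside the proof of Lemma \ref{3d1} through condition (A1). Thus beyond the factorization through approximations, the whole proof rests on (RTR3) and a single application of Lemma \ref{3d1} in each direction.
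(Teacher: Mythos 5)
Your proof is correct and follows essentially the same route as the paper: factor $f'$ through $f$ and $f$ through $f'$ using the left $\mathcal{D}$-approximation property, complete to morphisms of extensions via (RTR3), and apply Lemma \ref{3d1} to the composites $(1_X,\beta\alpha,y'y)$ and $(1_X,\alpha\beta,yy')$ against the identity morphisms to conclude $\underline{y'y}=\underline{1_Y}$ and $\underline{yy'}=\underline{1_{Y'}}$. The paper's proof is just a more compressed version of this, recording the two factorizations and the composite comparison in a single three-row diagram before invoking Lemma \ref{3d1}.
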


\begin{proof}
Since $f,f'$ are left $\mathcal{D}$-approximations, we obtain the following commutative diagram
$$\xymatrix{
\Omega Y\ar[r]^{e}\ar[d]^{\Omega y} & X\ar[r]^{f}\ar@{=}[d] & D\ar[r]^{g}\ar[d]^{d} &
Y\ar[r]^{h}\ar[d]^{y} & \Sigma X\ar@{=}[d] \\
\Omega Y'\ar[r]^{e'}\ar [d]^{\Omega y'} & X\ar[r]^{f'}\ar@{=}[d] & D'\ar[r]^{g'}\ar[d]^{d'} &
 Y'\ar[r]^{h'}\ar[d]^{ y'} & \Sigma X\ar@{=}[d] \\
\Omega Y\ar[r]^{e} & X\ar[r]^{f} & D\ar[r]^{g}  & Y\ar[r]^{h} & \Sigma X. }
$$ By Lemma \ref{3d1}, we get $\underline{y'} \underline{y}=\underline{1_{Y}}$. Similarly, we can show that $\underline{y} \underline{y'}=\underline{1_{Y'}}$. Hence $Y$ and $Y'$ are isomorphic in $\mathcal{Z}/\mathcal{D}$.
\end{proof}

For any $X\in\mathcal{Z}$, by definition of $\mathcal{D}$-mutation pair,  there exists an extension $\Omega TX\xrightarrow{e}X\xrightarrow{f}D\xrightarrow{g}TX\xrightarrow{h}\Sigma X$, where  $D\in\mathcal{D}$, $TX\in\mathcal{Z}$, $f$ is a left $\mathcal{D}$-approximation and $g$ is a right $\mathcal{D}$-approximation. By Lemma \ref{3d2}, $TX$ is unique up to isomorphism in the quotient category $\mathcal{Z}/\mathcal{D}$. So for any  $X\in\mathcal{Z}$, we fix an  extension as above. For any $x\in\mathcal{Z}(X,X')$, Since $f$ is a left $\mathcal{D}$-approximation, we can complete the following commutative diagram:
$$\xymatrix{
\Omega TX\ar[r]^{e}\ar@{-->}[d]^{\Omega y} & X\ar[r]^{f}\ar[d]^{x} & D\ar[r]^{g}\ar@{-->}[d]^{d} &
TX\ar[r]^{h}\ar@{-->}[d]^{y} & \Sigma X\ar[d]^{\Sigma x} \\
\Omega TX'\ar[r]^{e'} & X'\ar[r]^{f'} & D'\ar[r]^{g'} & TX'\ar[r]^{h'} & \Sigma X'. }
$$
We define a functor $T:\mathcal{Z}/\mathcal{D}\rightarrow\mathcal{Z}/\mathcal{D}$ by $T(X)=TX$ on the objects $X$ of $\mathcal{Z}/\mathcal{D}$ and by $T\underline{x}=\underline{y}$ on the morphisms $\underline{x}:X\rightarrow X'$ of $\mathcal{Z}/\mathcal{D}$. By Lemma \ref{3d1}, $T\underline{x}$ is well defined and $T$ is an additive functor. Sometimes we denote $y$ by $Tx$ for convenience.

\begin{lem}
The functor $T:\mathcal{Z}/\mathcal{D}\rightarrow\mathcal{Z}/\mathcal{D}$ is an equivalence.
\end{lem}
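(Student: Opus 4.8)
The plan is to build an explicit quasi-inverse functor $S\colon\mathcal{Z}/\mathcal{D}\to\mathcal{Z}/\mathcal{D}$ and to exhibit natural isomorphisms $ST\cong\mathrm{id}$ and $TS\cong\mathrm{id}$. The construction of $S$ is dual to that of $T$: whereas $T$ was extracted from condition (1) of the mutation pair (each $X$ sits in a fixed extension $\Omega TX\xrightarrow{e}X\xrightarrow{f}D\xrightarrow{g}TX\xrightarrow{h}\Sigma X$ with $f$ a left $\mathcal{D}$-approximation), $S$ will be extracted from condition (2) (each $Y$ sits in an extension $\Omega Y\xrightarrow{e}X\xrightarrow{f}D\xrightarrow{g}Y\xrightarrow{h}\Sigma X$ with $g$ a right $\mathcal{D}$-approximation and $X\in\mathcal{Z}$).

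First I would record the dual of Lemma \ref{3d2}: if $\Omega Y\xrightarrow{e}X\to D\xrightarrow{g}Y$ and $\Omega Y\xrightarrow{e'}X'\to D'\xrightarrow{g'}Y$ are extensions in $\mathcal{Z}$ with $g,g'$ right $\mathcal{D}$-approximations, then $X\cong X'$ in $\mathcal{Z}/\mathcal{D}$. This is proved exactly as Lemmas \ref{3d1} and \ref{3d2} but with all arrows reversed, using axiom (A2) in place of (A1): since $g,g'$ are right $\mathcal{D}$-approximations, $g$ lifts through $g'$ and conversely, and one completes these liftings to morphisms of extensions whose composites are identities in the quotient by the dual of Lemma \ref{3d1}. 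Fixing one such extension for every $Y$ defines $SY:=X$ on objects, well defined up to isomorphism in $\mathcal{Z}/\mathcal{D}$. On morphisms, given $y\in\mathcal{Z}(Y,Y')$ with fixed extensions $\Omega Y\to SY\to D\xrightarrow{g}Y$ and $\Omega Y'\to SY'\to D'\xrightarrow{g'}Y'$, the map $yg\colon D\to Y'$ factors through the right $\mathcal{D}$-approximation $g'$, and completing the resulting partial diagram to a morphism of extensions produces $x\colon SY\to SY'$; I set $S\underline{y}:=\underline{x}$. The dual of Lemma \ref{3d1} makes $\underline{x}$ independent of all choices, so $S$ is a well-defined additive functor.

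Next I would produce the two natural isomorphisms on objects. The fixed extension $\Omega TX\to X\to D\xrightarrow{g}TX\to\Sigma X$ defining $TX$ has $g$ a right $\mathcal{D}$-approximation, hence it is precisely an extension of the type used to compute $S(TX)$; the dual uniqueness above then gives an isomorphism $\sigma_X\colon S(TX)\xrightarrow{\sim}X$ in $\mathcal{Z}/\mathcal{D}$. Dually, the fixed extension $\Omega Y\to SY\xrightarrow{f}D\to Y\to\Sigma SY$ defining $SY$ has $f$ a left $\mathcal{D}$-approximation, so it computes $T(SY)$ and Lemma \ref{3d2} yields an isomorphism $\tau_Y\colon T(SY)\xrightarrow{\sim}Y$. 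Together with the fact that $S$ is dense (condition (2) exhibits every $X$ as $S$ of something, so each of $T,S$ is already essentially surjective), this shows $T$ and $S$ are mutually inverse on isomorphism classes of objects.

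The main obstacle is naturality: I must check $\sigma_{X'}\circ ST\underline{x}=\underline{x}\circ\sigma_X$ for every $\underline{x}\colon X\to X'$, and the analogous square for $\tau$. Each side unwinds to a morphism of extensions, and the two candidates agree in their component at the distinguished end term (the one carrying the left, resp. right, $\mathcal{D}$-approximation); hence they coincide in $\mathcal{Z}/\mathcal{D}$ by Lemma \ref{3d1} and its dual. Organizing the several superimposed extensions so that Lemma \ref{3d1} applies cleanly is where the gluing Lemma \ref{2d2} and the full strength of Assumption \ref{a1} are needed, and this diagram bookkeeping is the part I expect to be delicate. Once naturality is verified, $\sigma$ and $\tau$ are natural isomorphisms, $S$ is a quasi-inverse of $T$, and $T$ is an equivalence.
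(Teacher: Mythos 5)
Your proposal is correct and follows essentially the same route as the paper: your functor $S$ is exactly the paper's $T^{\ast}$, built from condition (2) of the mutation pair, with the dual of Lemmas \ref{3d1} and \ref{3d2} (using (A2) in place of (A1)) guaranteeing well-definedness, and the two fixed extensions doing double duty to give $ST\cong \mathrm{id}$ and $TS\cong \mathrm{id}$. The paper compresses all of this into ``we can similarly define an additive functor $T^{\ast}$\dots\ it is easy to check,'' so your write-up simply supplies the details (including the naturality verification) that the paper leaves implicit.
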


\begin{proof}
For any $Y\in\mathcal{Z}$, we fix an extension $\Omega Y\xrightarrow{e}T^{\ast}Y\xrightarrow{f}D\xrightarrow{g}Y\xrightarrow{h}\Sigma T^{\ast}Y$, where $D\in\mathcal{D}$, $T^{\ast}Y\in\mathcal{Z}$, $f$ is a left $\mathcal{D}$-approximation and $g$ is a right $\mathcal{D}$-approximation. We can similarly define an additive functor $T^{\ast}:\mathcal{Z}/\mathcal{D}\rightarrow\mathcal{Z}/\mathcal{D}$ by $T^{\ast}(Y)=T^{\ast}Y$. It is easy to check that $T^{\ast}T\cong id$ and $TT^{\ast}\cong id$. Thus $T$ is an equivalence.
\end{proof}

Let $\Omega Z\xrightarrow{u}X\xrightarrow{v}Y\xrightarrow{w}Z\xrightarrow{x}\Sigma X$ be an extension in $\mathcal{Z}$ and $v$ is $\mathcal{D}$-monic. Since $X\in\mathcal{Z}$, we have an extension $\Omega TX\xrightarrow{e}X\xrightarrow{f}D\xrightarrow{g}TX\xrightarrow{h}\Sigma X$. Then we can obtain the following commutative diagram since $v$ is $\mathcal{D}$-monic.
$$\xymatrix{
\Omega Z\ar[r]^{u}\ar[d]^{\Omega z} & X\ar[r]^{v}\ar@{=}[d] & Y\ar[r]^{w}\ar[d]^{y} &
Z\ar[r]^{x}\ar[d]^{z} & \Sigma X\ar@{=}[d] \\
\Omega TX\ar[r]^{e} & X\ar[r]^{f} & D\ar[r]^{g} & TX\ar[r]^{h} & \Sigma X }
$$
We call the sequence $X\xrightarrow{\underline{v}}Y\xrightarrow{\underline{w}} Z\xrightarrow{\underline{z}} TX$ a standard triangle in $\mathcal{Z}/\mathcal{D}$. We define the class of distinguished triangle $\triangle$ as the sextuples which are isomorphic to standard triangles.

\begin{lem}\label{3d4}
Let $$\xymatrix{
\Omega Z\ar[r]^{u}\ar[d]^{\Omega c} & X\ar[r]^{v}\ar[d]^{a} & Y\ar[r]^{w}\ar[d]^{b} &
Z\ar[r]^{x}\ar[d]^{c} & \Sigma X\ar[d]^{\Sigma a} \\
\Omega Z'\ar[r]^{u'} & X'\ar[r]^{v'} & Y'\ar[r]^{w'} & Z'\ar[r]^{x'} & \Sigma X' }
$$ be morphism of extensions in $\mathcal{Z}$, and $v,v'$ be $\mathcal{D}$-monic. Then we have the morphism of standard triangles in $\mathcal{Z}/\mathcal{D}$:
$$\xymatrix{
 X\ar[r]^{\underline{v}}\ar[d]^{\underline{a}} & Y\ar[r]^{\underline{w}}\ar[d]^{\underline{b}} &
Z\ar[r]^{\underline{z}}\ar[d]^{\underline{c}} & TX\ar[d]^{T\underline{a}} \\
 X'\ar[r]^{\underline{v}'} & Y'\ar[r]^{\underline{w}'} & Z'\ar[r]^{\underline{z}'} & TX'. }
$$
\end{lem}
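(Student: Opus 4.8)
The plan is to verify that the three squares of the claimed diagram commute in $\mathcal{Z}/\mathcal{D}$. The two leftmost squares are immediate: since $(a,b,c)$ is a morphism of extensions in $\mathcal{C}$, we already have $bv=v'a$ and $cw=w'b$ in $\mathcal{C}$, and passing to $\mathcal{Z}/\mathcal{D}$ yields $\underline{b}\,\underline{v}=\underline{v}'\,\underline{a}$ and $\underline{c}\,\underline{w}=\underline{w}'\,\underline{b}$. Thus the only real content is the rightmost square, namely the identity $\underline{z}'\,\underline{c}=T\underline{a}\cdot\underline{z}$ in $\mathcal{Z}/\mathcal{D}$.

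To prove this, I would exhibit both $z'c$ and $(Ta)z$ as the rightmost vertical component of a morphism of extensions, with the two morphisms sharing a common source and a common target, and then invoke Lemma \ref{3d1}. The common source is the first extension $\Omega Z\xrightarrow{u}X\xrightarrow{v}Y\xrightarrow{w}Z\xrightarrow{x}\Sigma X$, whose map $v$ is $\mathcal{D}$-monic by hypothesis; the common target is the fixed extension $\Omega TX'\xrightarrow{e'}X'\xrightarrow{f'}D'\xrightarrow{g'}TX'\xrightarrow{h'}\Sigma X'$ defining $TX'$, whose middle term $D'$ lies in $\mathcal{D}$. These are exactly the source and target requirements of Lemma \ref{3d1}.

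For the first morphism of extensions (the one carrying $(Ta)z$), I would compose the standard-triangle morphism for the first extension — the one with $X$-component $1_X$ and $Z$-component $z$ landing in the fixed extension defining $TX$ — with the functorial morphism of extensions defining $Ta$, whose $X$-component is $a$ and whose $TX$-component is $Ta$. Since a composite of morphisms of extensions is again a morphism of extensions, this produces a morphism from the source to the target with $X$-component $a\cdot 1_X=a$ and rightmost component $(Ta)z$. For the second morphism of extensions (the one carrying $z'c$), I would compose the given $(a,b,c)$ with the standard-triangle morphism for the second extension — the one with $X'$-component $1_{X'}$ and $Z'$-component $z'$ landing in the fixed extension defining $TX'$, which exists precisely because $v'$ is $\mathcal{D}$-monic. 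This composite is a morphism from the source to the target with $X$-component $1_{X'}\cdot a=a$ and rightmost component $z'c$.

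Both morphisms therefore have the identical $X$-component $a$, so trivially $\underline{a}=\underline{a}$ in $\mathcal{Z}/\mathcal{D}$; Lemma \ref{3d1} then forces their rightmost components to agree, that is, $\underline{(Ta)z}=\underline{z'c}$, which is precisely $T\underline{a}\cdot\underline{z}=\underline{z}'\,\underline{c}$. The only real obstacle is the bookkeeping: one must check that both composites genuinely begin at the $\mathcal{D}$-monic extension and end at the fixed $X'$-extension whose middle term lies in $\mathcal{D}$, so that the two hypotheses of Lemma \ref{3d1} (the first map of the source is $\mathcal{D}$-monic, the middle object of the target is in $\mathcal{D}$) are satisfied. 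Once this alignment is confirmed, the conclusion is immediate.
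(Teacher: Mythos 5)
Your proposal is correct and follows essentially the same route as the paper: the paper likewise reduces everything to the rightmost square, exhibits $Ta\cdot z$ and $z'c$ as the rightmost components of two composite morphisms of extensions from the first extension (with $\mathcal{D}$-monic $v$) to the fixed extension defining $TX'$ (with middle term in $\mathcal{D}$), both having $X$-component $a$, and concludes by Lemma \ref{3d1}. No gaps; the bookkeeping you flag is exactly what the paper's two displayed diagrams verify.
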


\begin{proof} By the definition of standard triangle and definition of the functor $T$
 we have the following   commutative diagrams
$$\xymatrix{
\Omega Z\ar[r]^{u}\ar[d]^{\Omega z} & X\ar[r]^{v}\ar@{=}[d] & Y\ar[r]^{w}\ar[d]^{y} &
Z\ar[r]^{x}\ar[d]^{z} & \Sigma X\ar@{=}[d] \\
\Omega TX\ar[r]^{e}\ar[d]^{\Omega Ta} & X\ar[r]^{f}\ar[d]^{a} & D\ar[r]^{g}\ar[d]^{d} & TX\ar[r]^{h}\ar[d]^{Ta}& \Sigma X\ar[d]^{\Sigma a} \\
\Omega TX'\ar[r]^{e'} & X'\ar[r]^{f'} & D'\ar[r]^{g'} & TX'\ar[r]^{h'} & \Sigma X',}
$$and
$$\xymatrix{
\Omega Z\ar[r]^{u}\ar[d]^{\Omega c} & X\ar[r]^{v}\ar[d]^{a} & Y\ar[r]^{w}\ar[d]^{b} &
Z\ar[r]^{x}\ar[d]^{c} & \Sigma X\ar[d]^{\Sigma a} \\
\Omega Z'\ar[r]^{u'}\ar[d]^{\Omega z'} & X'\ar[r]^{v'}\ar@{=}[d] & Y'\ar[r]^{w'}\ar[d]^{y'} & Z'\ar[r]^{x'}\ar[d]^{z'}& \Sigma X'\ar@{=}[d] \\
\Omega TX'\ar[r]^{e'} & X'\ar[r]^{f'} & D'\ar[r]^{g'} & TX'\ar[r]^{h'} & \Sigma X'.}
$$ By Lemma \ref{3d1} we get $T\underline{a}\cdot\underline{z}=\underline{z}'\cdot\underline{c}$.
\end{proof}

\begin{lem}\label{3d3}
Let
$$\xymatrix{
& X'\ar@{=}[r]\ar[d]^{l'} & X'\ar[d]^{f'} &\\
X\ar[r]^{l}\ar@{=}[d] & M\ar[r]^{m}\ar[d]^{m'} & Y'
\\
X\ar[r]^{f} & Y &
}$$ be a commutative diagram,
where $m$ is the pseudo-cokernel of $l$ and $m'l'=0$. If $l'$ and $f$ are $\mathcal{D}$-monic, then $f'$ is also $\mathcal{D}$-monic.
\end{lem}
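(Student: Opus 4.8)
The plan is to verify the factorization property in the definition of $\mathcal{D}$-monic directly: given any object $D\in\mathcal{D}$ and any morphism $\alpha\colon X'\to D$, I must produce $\beta\colon Y'\to D$ with $\beta f'=\alpha$. First I would read off from the diagram the three relations I will use: commutativity of the top square gives $f'=ml'$, commutativity of the lower-left square gives $f=m'l$, and the hypotheses supply $m'l'=0$ together with the fact that $m$ is a pseudo-cokernel of $l$ (so $ml=0$ and every morphism out of $M$ annihilating $l$ factors through $m$).

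The chase then proceeds in three factorizations. Since $l'\colon X'\to M$ is $\mathcal{D}$-monic, I can factor $\alpha$ as $\alpha=\gamma l'$ for some $\gamma\colon M\to D$. Next I examine the composite $\gamma l\colon X\to D$; because $f\colon X\to Y$ is $\mathcal{D}$-monic, there is $\delta\colon Y\to D$ with $\delta f=\gamma l$. Using $f=m'l$ this reads $\delta m'l=\gamma l$, that is $(\gamma-\delta m')l=0$. At this point the pseudo-cokernel property of $m$ applies to the morphism $\gamma-\delta m'\colon M\to D$, yielding $\epsilon\colon Y'\to D$ with $\epsilon m=\gamma-\delta m'$.

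It remains to check that $\beta:=\epsilon$ works. Substituting $f'=ml'$ gives $\epsilon f'=\epsilon m l'=(\gamma-\delta m')l'=\gamma l'-\delta(m'l')$, and since $\gamma l'=\alpha$ and $m'l'=0$ the right-hand side collapses to $\alpha$. Hence $\alpha$ factors through $f'$, and as $D$ and $\alpha$ were arbitrary, $f'$ is $\mathcal{D}$-monic. The argument is essentially a diagram chase, so there is no deep obstacle; the only points demanding care are invoking the pseudo-cokernel property in its weak (existence-only, non-unique) form and arranging the computation so that the hypothesis $m'l'=0$ is consumed at exactly the final step — without it the stray term $\delta(m'l')$ would survive and the desired factorization would fail.
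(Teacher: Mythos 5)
Your proof is correct and is essentially identical to the paper's own argument: both factor $\alpha$ through $l'$, then through $f$ via $f=m'l$, apply the pseudo-cokernel property of $m$ to the difference, and use $m'l'=0$ in the final substitution. No discrepancies to report.
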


\begin{proof}
For any $a: X'\rightarrow D$, where $D\in\mathcal{D}$. We need to show that $a$ factors through $f'$. Since $l'$ is $\mathcal{D}$-monic,  there exists a morphism $b:M\rightarrow D$ such that $a=bl'$. Since $f$ is $\mathcal{D}$-monic, there exists a morphism $c: Y\rightarrow D$ such that $bl=cf$. Thus $(b-cm')l=bl-cf=0$. There exists a morphism $d:Y'\rightarrow D$ such that $b-cm'=dm$. Hence $a=bl'=bl'-cm'l'=dml'=df'$.
\end{proof}

\begin{thm}\label{3d6}

Let $\mathcal{C}$ be a pseudo-triangulated category satisfying Assumption \ref{a1}, and $\mathcal{Z}$ be an extension-closed subcategory of $\mathcal{C}$. If $(\mathcal{Z},\mathcal{Z})$ is a $\mathcal{D}$-mutation pair , then  $(\mathcal{Z}/\mathcal{D}, T, \bigtriangleup )$ is a triangulated category.
\end{thm}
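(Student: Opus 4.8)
The plan is to verify the four axioms of a triangulated category for $(\mathcal{Z}/\mathcal{D}, T, \bigtriangleup)$, having already established that $T$ is an auto-equivalence. The preliminary lemmas have been carefully staged to make this feasible: Lemma \ref{3d1} and Lemma \ref{3d4} control morphisms of standard triangles, Lemma \ref{3d2} gives uniqueness of $TX$ up to isomorphism in $\mathcal{Z}/\mathcal{D}$, and Lemma \ref{2d2} (the octahedral-type gluing of extensions) together with Lemma \ref{3d3} (preservation of $\mathcal{D}$-monicity) supplies the machinery to construct the third triangle in the octahedron. I would organize the proof around the standard axioms, which in the shifted notation of this paper read roughly as (TR1) existence and closure under isomorphism, (TR2) rotation, (TR3) the morphism/fill-in axiom, and (TR4) the octahedral axiom.

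\textbf{Axioms (TR1) and (TR2).} For (TR1), every identity-type triangle and every morphism must embed in a standard triangle. Given $\underline{v}\colon X\to Y$ in $\mathcal{Z}/\mathcal{D}$, I would first replace $v$ by a genuinely $\mathcal{D}$-monic representative: since $\mathcal{D}\subseteq\mathcal{Z}$ and the mutation pair gives left $\mathcal{D}$-approximations, one adds a component $X\xrightarrow{f}D$ into $\mathcal{D}$ so that $(v,f)\colon X\to Y\oplus D$ is $\mathcal{D}$-monic while representing the same morphism in the quotient, then completes $(v,f)$ to an extension using (RTR1) and Lemma \ref{2d3} (the extension-closedness of $\mathcal{Z}$ guarantees the cone lies in $\mathcal{Z}$). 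Closure under isomorphism is built into the definition of $\bigtriangleup$. For (TR2), rotating a standard triangle $X\xrightarrow{\underline{v}}Y\xrightarrow{\underline{w}}Z\xrightarrow{\underline{z}}TX$ to $Y\xrightarrow{\underline{w}}Z\xrightarrow{\underline{z}}TX\xrightarrow{-T\underline{v}}TY$, I would show the rotated sextuple is itself isomorphic to a standard triangle; the key point is that the defining diagram for the standard triangle, when combined with axiom (RTR2) and the adjunction $\psi$ linking $\Omega$ and $\Sigma$, realizes $TX$ as the base of a new extension whose $\mathcal{D}$-monic map is $\underline{w}$, and Lemma \ref{2d4} identifies the resulting objects up to the required isomorphism. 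Sign conventions here must be tracked carefully but are routine.

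\textbf{Axiom (TR3).} Given two standard triangles and a commuting square on the first two terms in $\mathcal{Z}/\mathcal{D}$, I would lift the square to representatives in $\mathcal{Z}$, use (RTR3) (or rather the extension-level fill-in available from Lemma \ref{2d2} and the morphism-of-extensions formalism) to complete to a morphism of the underlying extensions, and then Lemma \ref{3d4} hands over the induced morphism of standard triangles directly. The only subtlety is that the lifted square need only commute in the quotient, so I would first absorb the discrepancy—which factors through $\mathcal{D}$—using $\mathcal{D}$-monicity of $v$ exactly as in the proof of Lemma \ref{3d1}, reducing to the case of an honest commuting square of extensions.

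\textbf{Axiom (TR4), the octahedron.} This is where I expect the real work to lie. Starting from composable $\mathcal{D}$-monic maps $X\xrightarrow{v}Y$ and $Y\xrightarrow{v'}Y''$ with standard triangles on $v$, $v'$, and the composite $v'v$, I would invoke Lemma \ref{2d2} to glue the three defining extensions into the large commutative grid, extracting the fourth column as a genuine extension in $\mathcal{C}$. Two things must then be checked to land back inside the formalism: that all objects appearing remain in $\mathcal{Z}$ (using extension-closedness repeatedly), and—this is the crux—that the newly produced map along the fourth column is again $\mathcal{D}$-monic, so that the fourth extension qualifies to define a standard triangle. This last point is precisely what Lemma \ref{3d3} was designed to deliver, its hypotheses ($l'$ and $f$ being $\mathcal{D}$-monic, $m$ a pseudo-cokernel) matching the data produced by Lemma \ref{2d2}. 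Having secured a fourth standard triangle, I would verify the two commutativity relations of the octahedral axiom by translating the commutativity of the big diagram from Lemma \ref{2d2} into the quotient via Lemma \ref{3d1} and Lemma \ref{3d4}. The main obstacle throughout is bookkeeping: ensuring at each gluing step that the chosen representatives are $\mathcal{D}$-monic and that Assumption \ref{a1}, via (A1)/(A2), is available to kill the error terms that obstruct well-definedness in $\mathcal{Z}/\mathcal{D}$—this is the role Assumption \ref{a1} plays that is invisible in the classical triangulated and abelian cases.
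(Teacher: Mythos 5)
Your overall architecture (verify (TR1)--(TR4), using Lemmas \ref{3d1}, \ref{2d2}, \ref{3d3}, \ref{3d4} in essentially the roles the paper assigns them; your (TR3) is exactly the paper's argument) is right, but two of your mechanisms fail as stated and one key idea is missing. First, in (TR1): extension-closedness of $\mathcal{Z}$ does \emph{not} put the cone $Z$ of $(v,f)^T\colon X\to Y\oplus D$ in $\mathcal{Z}$. The definition only closes $\mathcal{Z}$ under \emph{middle} terms of extensions whose end terms lie in $\mathcal{Z}$, and in the extension you build, $Z$ is an end term. The paper gets $Z\in\mathcal{Z}$ by gluing three extensions via Lemma \ref{2d2} (its diagram (3.1)), whose fourth column $\Omega TX\to Y\xrightarrow{f'}Z\xrightarrow{g'}TX\to\Sigma Y$ exhibits $Z$ as a middle term between $Y,TX\in\mathcal{Z}$. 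Second, this same diagram --- not ``(RTR2) plus the adjunction,'' as you propose --- is what powers (TR2): (RTR2) rotates into $\Sigma X$, which is neither $TX$ nor generally in $\mathcal{Z}$, so it cannot produce the rotated \emph{standard} triangle. What is needed is precisely the fourth column of (3.1), together with Lemma \ref{3d3} to see that its second map $f'$ is $\mathcal{D}$-monic (so the rotated triangle qualifies as standard), a pseudocokernel argument showing an arbitrary distinguished triangle on $\underline{v}$ is isomorphic to the (TR1)-constructed one, and a genuine diagram computation (composing two morphisms of extensions and invoking Lemma \ref{3d1}) identifying the new connecting map with $-T\underline{v}$; none of this is a routine sign convention.

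The deepest gap is in (TR4). Lemma \ref{2d2} plus Lemma \ref{3d4} do yield the commutative grid of four distinguished triangles, as you say, but the octahedral axiom also demands the compatibility square $T\underline{v}\cdot\underline{t}=-\,T\underline{v}'\cdot\underline{t}'$ relating the connecting morphisms, and this does \emph{not} follow by ``translating the commutativity of the big diagram into the quotient'': the morphisms $T\underline{v}$ and $T\underline{v}'$ are defined through the fixed approximation extensions $X\to D\to TX$ and $Y\to D'\to TY$, which do not appear in the Lemma \ref{2d2} diagram at all. The paper's proof here constructs two auxiliary morphisms of extensions $(v,a,b)$ and $(v',a',b')$ into the extension defining $TY$, arranged so that $f'=aw'+a'w$, identifies $\underline{b}=T\underline{v}\cdot\underline{t}$ and $\underline{b}'=T\underline{v}'\cdot\underline{t}'$ via Lemma \ref{3d1}, and then shows $b+b'$ factors through $\mathcal{D}$ by applying (A1) to an auxiliary extension built on the composite $q'w$, which is $\Sigma$-epic by Lemma \ref{3d7}(3) and so fits into an extension by Lemma \ref{2d3}. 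This is the one place where Assumption \ref{a1} enters (TR4) essentially --- not as bookkeeping for well-definedness, which is how your proposal frames it --- and your plan contains no substitute for this construction.
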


\begin{proof}
We will check that the distinguished triangles in $\triangle$ satisfy the axioms of triangulated category.

(TR1) Let $v\in \mathcal{Z}(X,Y)$. Take $v'=(v,f)^T: X\rightarrow Y\oplus D$ and $p_D=(0,1_D):Y\oplus D\rightarrow D$, where $f: X\rightarrow D$ is a left $\mathcal{D}$-approximation of $X$. Then $p_Dv'=f$. Since $f$ is $\mathcal{D}$-monic and $\Omega$-monic, we get $v'$ is $\mathcal{D}$-monic, and $v'$ is also $\Omega$-monic by Lemma \ref{3d7}(2). Now there are three extensions $\Omega TX\xrightarrow{e}X\xrightarrow{f}D\xrightarrow{g}TX\xrightarrow{h}\Sigma X$,
 $\Omega D\xrightarrow{0}Y\xrightarrow{i_Y}Y\oplus D\xrightarrow{p_D}D\xrightarrow{0}\Sigma Y$ and
 $\Omega Z\xrightarrow{u'}X\xrightarrow{v'}Y\oplus D\xrightarrow{w'}Z\xrightarrow{x'}\Sigma X$.
 By Lemma \ref{2d2}, there exist $g'\in\mathcal{C}(Z,TX)$ and $h'\in\mathcal{C}(TX,\Sigma Y)$ such that the following diagram is commutative and the fourth column is an extension.
$$\xymatrix{
& & \Omega D\ar[r]^{\Omega g}\ar[d]^{0} & \Omega TX\ar[d]^{-\psi^{-1}(h')} \\
& & Y\ar@{=}[r]\ar[d]^{i_Y} & Y\ar[d]^{f'}\\
\Omega Z\ar[r]^{u'}\ar[d] & X\ar[r]^{v'}\ar@{=}[d] & Y\oplus D \ar[r]^{w'}\ar[d]^{p_D} &
 Z\ar[r]^{x'}\ar@{-->}[d]^{g'} & \Sigma X \ar@{=}[d]& (3.1)\\
\Omega TX \ar[r]^{e} & X\ar[r]^{f} & D\ar[r]^{g}\ar[d]^{0} &
TX\ar[r]^{h}\ar@{-->}[d]^{h'} & \Sigma X \ar[d]^{\Sigma v'}\\
& & \Sigma Y\ar@{=}[r] & \Sigma Y \ar[r]^{-\Sigma i_Y} & \Sigma (Y\oplus D)} $$
Since $Y, TX\in\mathcal{Z}$ and $\mathcal{Z}$ is extension-closed, we get $Z\in\mathcal{Z}$. Thus
$X\xrightarrow{\underline{v'}}Y\oplus D \xrightarrow{\underline{w'}} Z\xrightarrow{\underline{g'}}TX$ is a standard triangle. So  $X\xrightarrow{\underline{v}}Y \xrightarrow{\underline{f'}} Z\xrightarrow{\underline{g'}}TX$ is a distinguished triangle. It is  easy to see that $\Omega 0\rightarrow X\xrightarrow{1_X}X \rightarrow 0\rightarrow \Sigma X$ is an extension and $1_X$ is $\mathcal{D}$-monic. Thus $X\xrightarrow{\underline{1}_X}X \rightarrow 0\rightarrow TX\in\Delta$.

(TR2) Let $$X\xrightarrow{\underline{v}}Y \xrightarrow{\underline{w}} Z'\xrightarrow{\underline{z}}TX\ \ \ \ \ \ (3.2) \ $$ be a distinguished triangle induced by the extension $\Omega Z'\xrightarrow{u}X\xrightarrow{v}Y\xrightarrow{w}Z'\xrightarrow{x}\Sigma X$, where $v$ is $\mathcal{D}$-monic.
By (TR1) we suppose that
$$X\xrightarrow{\underline{v}}Y \xrightarrow{\underline{f'}} Z\xrightarrow{\underline{g'}}TX\ \ \ \ \ \ (3.3) \ $$
is a distinguished triangle induced by $\underline{v}$. Since $v$ is $\mathcal{D}$-monic, there exists $y:Y\rightarrow D$ such that $f=yv$. Thus we have the following commutative diagram
$$\xymatrix{
\Omega Z'\ar[r]^{u}\ar[d]^{\Omega z} & X\ar[r]^{v}\ar@{=}[d] & Y\ar[r]^{w}\ar[d]^{(1_Y,y)^T} &
Z'\ar[r]^{x}\ar[d]^{z} & \Sigma X\ar@{=}[d] \\
\Omega Z\ar[r]^{u'}\ar[d]^{\Omega z'} & X\ar[r]^{v'}\ar@{=}[d] & Y\oplus D\ar[r]^{w'}\ar[d]^{p_Y} & Z\ar[r]^{x'}\ar[d]^{z'}& \Sigma X\ar@{=}[d] \\
\Omega Z'\ar[r]^{u} & X\ar[r]^{v} & Y\ar[r]^{w} & Z'\ar[r]^{x} & \Sigma X.}$$
Since $p_Y(1_Y,p)^T=1_Y$, we get $z'z$ is an isomorphism by Lemma \ref{2d4}. On the other hand, let $w'=(f',a')$. Since $w'$ is a pseudocokernel of $v'$ and  $(y,-1_D)v'=(y,-1_D)(v,f)^T=0$, there exists $c:Z\rightarrow D$ such that $(y,-1_D)=cw'=c(f',a')$. Thus $y=cf', ca'=-1_D$.
By Lemma \ref{2d2}, we have the following commutative diagram
$$\xymatrix{
& & \Omega Y\ar[r]^{\Omega w}\ar[d]^{0} & \Omega Z'\ar[d]^{-\psi^{-1}(b')} \\
& & D\ar@{=}[r]\ar[d]^{i_D} & D\ar[d]^{a'}\\
\Omega Z\ar[r]^{u'}\ar[d] & X\ar[r]^{v'}\ar@{=}[d] & Y\oplus D \ar[r]^{w'}\ar[d]^{p_Y} &
 Z\ar[r]^{x'}\ar[d]^{z'} & \Sigma X \ar@{=}[d]& \\
\Omega Z' \ar[r]^{u} & X\ar[r]^{v} & Y\ar[r]^{w}\ar[d]^{0} &
Z'\ar[r]^{x}\ar[d]^{b'} & \Sigma X \ar[d]^{\Sigma v'}\\
& & \Sigma D\ar@{=}[r] & \Sigma D \ar[r]^{-\Sigma i_D} & \Sigma (Y\oplus D),} $$
where the fourth column is an extension. Note that $z'$ is  a pseudocokernel of $a'$ and $(1_Z+a'c)a'=a'-a'=0$,
there exists $z'':Z'\rightarrow Z$ such that $z''z'=1_Z+a'c$.
So $\underline{z''z'}=\underline{1_Z}$. Therefore, $\underline{z}'$ is an isomorphism in $\mathcal{Z}/\mathcal{D}$,
and  the distinguished triangle (3.2) and (3.3) are isomorphic by Lemma \ref{3d4}. Now we can replace the triangle (3.2) with (3.3).
According to the proof of (TR1), we get an extension $\Omega TX\xrightarrow{-\psi^{-1}(h')}Y\xrightarrow{f'} Z\xrightarrow{g'}TX\xrightarrow{h'}\Sigma Y$ in $\mathcal{Z}$. Since $i_Y$ and $f$ are $\mathcal{D}$-monic, so is $f'$ by Lemma \ref{3d3}. Take an extension  $\Omega TY\xrightarrow{e_{Y}}Y\xrightarrow{f_Y}D_Y\xrightarrow{g_Y}TY\xrightarrow{h_Y}\Sigma Y$, where $f_Y$ is a left $\mathcal{D}$-approximation and $g_Y$ is a right $\mathcal{D}$-approximation. Then we get the following commutative diagram
$$\xymatrix{
\Omega TX\ar[r]^{-\psi^{-1}(h')}\ar[d]^{\Omega z'} & Y\ar[r]^{f'}\ar@{=}[d] & Z\ar[r]^{g'}\ar[d]^{y'} &
TX\ar[r]^{h'}\ar[d]^{z'} & \Sigma Y\ar@{=}[d] \\
\Omega TY\ar[r]^{e_Y} & Y\ar[r]^{f_Y} & D_Y\ar[r]^{g_Y} & TY\ar[r]^{h_Y} & \Sigma Y. }
$$ Thus $Y \xrightarrow{\underline{f'}} Z\xrightarrow{\underline{g'}}TX\xrightarrow{\underline{z}'}TY$ is a distinguished triangle.
To end the proof of (TR2) we only need to show that $\underline{z}'=-T\underline{v}$. The commutative diagram (3.1) implies the following commutative diagram
$$\xymatrix{
\Omega TX\ar[r]^{e}\ar[d]^{-1_{\Omega TX}} & X\ar[r]^{f}\ar[d]^{v} & D\ar[r]^{g}\ar[d]^{-a'} &
TX\ar[r]^{h}\ar[d]^{-1_{TX}} & \Sigma X\ar[d]^{\Sigma v} \\
\Omega TX\ar[r]^{-\psi^{-1}(h')} & Y\ar[r]^{f'} & Z\ar[r]^{g'} & TX\ar[r]^{h'} & \Sigma Y. }
$$ In fact, $-a'f=-w'i_{D}f=-w'i_{D}p_{D}v'=-w'(v'-i_Yp_Yv')=w'i_Yp_Yv'=f'v$, $g'a'=g'w'i_D=gp_Di_D=g$ and $\Sigma v\cdot h=\Sigma p_Y\cdot\Sigma v'\cdot h=\Sigma p_Y\cdot-\Sigma i_Y\cdot h'=-h'$.
Composing the above two commutative diagrams,  we obtain the following commutative diagram
$$\xymatrix{
\Omega TX\ar[r]^{e}\ar[d]^{-\Omega z'} & X\ar[r]^{f}\ar[d]^{v} & D\ar[r]^{g}\ar[d]^{-y'a'} &
TX\ar[r]^{h}\ar[d]^{-z'} & \Sigma X\ar[d]^{\Sigma v} \\
\Omega TY\ar[r]^{e_Y} & Y\ar[r]^{f_Y} & D_Y\ar[r]^{g_Y} & TY\ar[r]^{h_Y} & \Sigma Y, }
$$ which implies that $T\underline{v}=-\underline{z}'$.

(TR3) Suppose there is a commutative diagram
$$\xymatrix{
X\ar[r]^{\underline{v}}\ar[d]^{\underline{a}} &  Y\ar[r]^{\underline{w}}\ar[d]^{\underline{b}} &
Z\ar[r]^{\underline{z}} & TX\ar[d]^{T\underline{a}} \\
X'\ar[r]^{\underline{v}'} & Y'\ar[r]^{\underline{w}'} & Z'\ar[r]^{\underline{z}'} & TX', }$$
where the rows are distinguished triangles. We may assume that $v,v'$ are $\mathcal{D}$-monic and the distinguished triangles are arising from the extensions $\Omega Z\xrightarrow{u}X\xrightarrow{v}Y\xrightarrow{w}Z\xrightarrow{x}\Sigma X$ and $\Omega Z'\xrightarrow{u'}X'\xrightarrow{v'}Y'\xrightarrow{w'}Z'\xrightarrow{x'}\Sigma X'$ respectively. Since $\underline{b}\underline{v}=\underline{v}'\underline{a}$, there exist $a_1:X\rightarrow D, a_2:D\rightarrow Y'$ such that $bv-v'a=a_2a_1$, where $D \in \mathcal{D}$. Since $v$ is $\mathcal{D}$-monic, there exists $a_3:Y\rightarrow D$ such that $a_1=a_3v$. Thus $(b-a_2a_3)v=bv-a_{2}a_{1}=v'a$. So by (RTR3) there exists $c:Z\rightarrow Z'$ such that the following diagram is commutative
$$\xymatrix{
\Omega Z\ar[r]^{u}\ar[d]^{\Omega c} & X\ar[r]^{v}\ar[d]^{a} &  Y\ar[r]^{v}\ar[d]^{b-a_2a_3} &
Z\ar[r]^{x}\ar@{-->}[d]^{c} & \Sigma X\ar[d]^{\Sigma a} \\
\Omega Z'\ar[r]^{u'} & X'\ar[r]^{v'} & Y'\ar[r]^{w'} & Z'\ar[r]^{x'} & \Sigma X'. }$$
Hence (TR3) follows from Lemma \ref{3d4}.

(TR4) Let $X\xrightarrow{\underline{v}}Y \xrightarrow{\underline{w}} Z\xrightarrow{\underline{z}}TX$,
 $X'\xrightarrow{\underline{v'}}Y \xrightarrow{\underline{w'}} Z'\xrightarrow{\underline{z'}}TX'$ and
  $X\xrightarrow{\underline{w'v}}Z' \xrightarrow{\underline{q}} Y'\xrightarrow{\underline{t}}TX$  be distinguished triangles. Assume that $v$ and $v'$ are $\mathcal{D}$-monic. Let $f:X\rightarrow D$ be a left $\mathcal{D}$-approximation of $X$. Since
  $\left(
     \begin{array}{cc}
       w' & 0 \\
       0 & 1_D \\
     \end{array}
   \right)\left(
            \begin{array}{c}
              v \\
              f \\
            \end{array}
          \right)
  =\left(
     \begin{array}{c}
       w'v \\
       f \\
     \end{array}
   \right)
  $ is $\mathcal{D}$-monic, replacing $v$ by $\left(
            \begin{array}{c}
              v \\
              f \\
            \end{array}
          \right)$, $v'$ by $\left(
            \begin{array}{c}
              v' \\
              0 \\
            \end{array}
          \right)$, $w'$ by $\left(
     \begin{array}{cc}
       w' & 0 \\
       0 & 1_D \\
     \end{array}
   \right)$, and adjusting the corresponding  distinguished triangles, we can assume that $w'v$ is $\mathcal{D}$-monic too. Thus we can assume that the above three distinguished triangles are standard, which are induced by the following three extensions
   $\Omega Z\xrightarrow{u}X\xrightarrow{v}Y\xrightarrow{w}Z\xrightarrow{x}\Sigma X$, $\Omega Z'\xrightarrow{u'}X'\xrightarrow{v'}Y'\xrightarrow{w'}Z'\xrightarrow{x'}\Sigma X'$ and $\Omega Y'\xrightarrow{n}X\xrightarrow{w'v }Z'\xrightarrow{q}Y'\xrightarrow{r}\Sigma X$. By Lemma \ref{2d2}, we get the following commutative diagram
$$\xymatrix{
& & \Omega Z'\ar[r]^{\Omega q}\ar[d]^{u'} & \Omega Y'\ar[d]^{n'} \\
& & X'\ar@{=}[r]\ar[d]^{v'} & X'\ar[d]^{p'}\\
\Omega Z\ar[r]^{u}\ar[d]^{\Omega q'} & X\ar[r]^{v}\ar@{=}[d] & Y \ar[r]^{w}\ar[d]^{w'} &
 Z\ar[r]^{x}\ar@{-->}[d]^{q'} & \Sigma X \ar@{=}[d]\\
\Omega Y' \ar[r]^{n} & X\ar[r]^{w'v} & Z'\ar[r]^{q}\ar[d]^{x'} &
Y'\ar[r]^{r}\ar@{-->}[d]^{r'} & \Sigma X \ar[d]^{\Sigma v}\\
& & \Sigma X'\ar@{=}[r] & \Sigma X' \ar[r]^{-\Sigma v'} & \Sigma Y,} $$
where the third row, the fourth row, the third column and the fourth column are all extensions, moreover, $v,w'v,v',p'$ are all $\mathcal{D}$-monic by Lemma \ref{3d3}. Thus by Lemma \ref{3d4} we get the following commutative diagram of distinguished triangles:
 $$\xymatrix{
 & X'\ar@{=}[r]\ar[d]^{\underline{v}'} & X'\ar[d]^{\underline{p}'}\\
X\ar[r]^{\underline{v}}\ar@{=}[d] & Y \ar[r]^{\underline{w}}\ar[d]^{\underline{w}'} &
 Z\ar[r]^{\underline{z}}\ar[d]^{\underline{q}'} & TX \ar@{=}[d]\\
 X\ar[r]^{\underline{w}'\underline{v}} & Z'\ar[r]^{\underline{q}}\ar[d]^{\underline{z}'} &
Y'\ar[r]^{\underline{t}}\ar[d]^{\underline{t}'} & TX \\
 & TX'\ar@{=}[r] & TX'. } $$
To end the proof, it remains to show that the following diagram is commutative:
$$\CD
  Y' @>\underline{t}>> TX\\
  @V \underline{t}' VV @V T\underline{v} VV \ \ \ \  (3.4) \\
  TX' @>-T\underline{v}'>> TY
\endCD$$

We first claim that there exist morphisms of extensions
$$\xymatrix{
\Omega Y'\ar[r]^{n}\ar[d]^{\Omega b} & X\ar[r]^{w'v}\ar[d]^{v} &  Z'\ar[r]^{q}\ar[d]^{a} &
Y'\ar[r]^{r}\ar[d]^{b} & \Sigma X\ar[d]^{\Sigma v \ \ \ \ \mbox{(3.5)} }\\
\Omega TY\ar[r]^{e'} & Y\ar[r]^{f'} & D'\ar[r]^{g'} & TY\ar[r]^{h'} & \Sigma Y }$$
and $$\xymatrix{
\Omega Y'\ar[r]^{n'}\ar[d]^{\Omega b'} & X'\ar[r]^{p'}\ar[d]^{v'} &  Z\ar[r]^{q'}\ar[d]^{a'} &
Y'\ar[r]^{r'}\ar[d]^{b'} & \Sigma X'\ar[d]^{\Sigma v'} \\
\Omega TY\ar[r]^{e'} & Y\ar[r]^{f'} & D'\ar[r]^{g'} & TY\ar[r]^{h'} & \Sigma Y, }$$such that $f'=aw'+a'w$, $\underline{b}=T\underline{v}\cdot \underline{t}$ and $\underline{b'}=T\underline{v'}\cdot \underline{t'}$.

In fact, since $w'v$ is $\mathcal{D}$-monic, there exists $a:Z'\rightarrow D'$ such that $f'v=aw'v$, then by (RTR3) there exists $b:Y'\rightarrow TY$ such that $(v,a,b)$ is a morphism of extensions. Note that $(f'-aw')v=0$, there exists $a':Z\rightarrow D'$ such that $a'w=f'-aw'$. Thus $f'=a'w+aw'$ and $f'v'=a'wv'+aw'v'=a'p'$, then  by (RTR3) there exists $b':Y'\rightarrow TY$ such that $(v',a',b')$ is a morphism of extensions. By the construction of standard triangle, we have the following commutative diagram
$$\xymatrix{
\Omega Y'\ar[r]^{n}\ar[d]^{\Omega t} & X\ar[r]^{w'v}\ar@{=}[d] &  Z'\ar[r]^{q}\ar[d]^{s} &
Y'\ar[r]^{r}\ar[d]^{t} & \Sigma X\ar@{=}[d] \\
\Omega TX\ar[r]^{e} & X\ar[r]^{f} & D\ar[r]^{g} & TX\ar[r]^{h} & \Sigma X.}$$
On the other hand, by the definition of $Tv$, we have the following commutative diagram
$$\xymatrix{
\Omega TX\ar[r]^{e}\ar[d]^{\Omega Tv} & X\ar[r]^{f}\ar[d]^{v} & D\ar[r]^{g}\ar[d]^{d} &
TX\ar[r]^{h}\ar[d]^{Tv} & \Sigma X\ar[d]^{\Sigma v} \\
\Omega TY\ar[r]^{e'} & Y\ar[r]^{f'} & D'\ar[r]^{g'} & TY\ar[r]^{h'} & \Sigma Y. }
$$ Composing  the last two diagrams, we immediately obtain the following commutative diagram
$$\xymatrix{
\Omega Y'\ar[r]^{n}\ar[d]^{\Omega (Tv\cdot t)} & X\ar[r]^{w'v}\ar[d]^{v} &  Z'\ar[r]^{q}\ar[d]^{ds} &
Y'\ar[r]^{r}\ar[d]^{Tv\cdot t} & \Sigma X\ar[d]^{\Sigma v \ \ \ \ \mbox{(3.6)}}  \\
\Omega TY\ar[r]^{e'} & Y\ar[r]^{f'} & D'\ar[r]^{g'} & TY\ar[r]^{h'} & \Sigma Y.}$$
Comparing diagram (3.5) and diagram (3.6), we get $\underline{b}=T\underline{v}\cdot \underline{t}$ by Lemma \ref{3d1}. Similarly we can show $\underline{b'}=T\underline{v'}\cdot \underline{t'}$.

Now we can show  diagram (3.4) is commutative. On one hand, $h'(b+b')=\Sigma v\cdot r+\Sigma v'\cdot r'=0$. On the other hand, $(b+b')q'w=bqw'+b'q'w=g'aw'+g'a'w=g'f'=0$. Since $w,q'$ are $\Sigma$-epic, so is $q'w$ by Lemma \ref{3d7}(3). Thus we obtain an extension $\Omega Y'\xrightarrow{\alpha} Y''\xrightarrow {\beta}Y\xrightarrow{q'w} Y'\xrightarrow{\gamma}\Sigma Y''$ by Lemma \ref{2d3}. So $b+b'$ factors through $D'$ by (A1). Then $T\underline{v}\cdot \underline{t}+T\underline{v'}\cdot \underline{t'}=\underline{b}+\underline{b}'=0$.
\end{proof}

\begin{rem}
In the proof of (TR4), we see \cite{[C]} for the reason why we can adjust the morphisms $v$ and $w'$.
\end{rem}

\section{Examples}

Since Assumption \ref{a1} is trivial for both triangulated categories and abelian categories,   we can apply Theorem \ref{3d6} to several situations.

\begin{example}(\cite[Theorem 4.2]{[IY]})
Let $\mathcal{D}$ be a rigid subcategory of a triangulated category $\mathcal{C}$ and $\mathcal{Z}$ be a subcategory of $\mathcal{C}$ such that $\mathcal{D}\subseteq \mathcal{Z}$. We call $(\mathcal{Z},\mathcal{Z})$  a $\mathcal{D}$-mutation pair in the sense of Iyama-Yoshino, if it satisfies

 (1) $\mathcal{C}(\mathcal{Z},\Sigma\mathcal{D})=\mathcal{C}(\mathcal{D},\Sigma\mathcal{Z})=0$.

 (2) For any $X\in\mathcal{Z}$, there exists a distinguished triangle $X\xrightarrow{f} D\xrightarrow{g} Y\xrightarrow{h} \Sigma X$ with $D\in\mathcal{D}$ and $Y\in\mathcal{Z}$.

 (3) For any $Y\in\mathcal{Z}$, there exists a distinguished triangle $X\xrightarrow{f} D\xrightarrow{g} Y\xrightarrow{h} \Sigma X$ with $D\in\mathcal{D}$ and $X\in\mathcal{Z}$.

In condition (2) and (3), $f$ is a left $\mathcal{D}$-approximation and $g$ is a right $\mathcal{D}$-approximation immediately follows from condition (1). Thus $(\mathcal{Z},\mathcal{Z})$ is a $\mathcal{D}$-mutation pair in the sense of Definition 3.1. In this case, if  $\mathcal{Z}$ is extension-closed, then $\mathcal{Z}/\mathcal{D}$ is a triangulated category by Theorem \ref{3d6}.
\end{example}

\begin{example}(\cite[Theorem 2.6]{[H]})
Let $\mathcal{C}$ be an abelian category. Let $(\mathcal{B},\mathcal{S})$ be a Frobenius subcategory of $\mathcal{C}$ and $\mathcal{I}$  the subcategory of $\mathcal{B}$ consisting of all $S$-injectives. Then $\mathcal{B}/\mathcal{I}$ is a triangulated category.
\end{example}

\begin{proof}
According to Theorem \ref{3d6}, we need to show that $(\mathcal{B},\mathcal{B})$ is a $\mathcal{I}$-mutation pair. In fact, for any $X\in\mathcal{B}$, since $\mathcal{B}$ has enough $\mathcal{S}$-injectives, there exists a short exact sequence $0\rightarrow X\xrightarrow{f} I\xrightarrow{g} Y\rightarrow 0$ in $\mathcal{S}$, where $I\in\mathcal{I}$. It is clear that $f$ is a left $\mathcal{I}$-approximation of $X$. Since  the $\mathcal{S}$-projectives coincide with the $\mathcal{S}$-injectives, $g$ is a right $\mathcal{I}$-approximation of $Y$ by definition. The second condition  can be showed similarly.
\end{proof}

\begin{example}(\cite[Theorem 7.2]{[B]})
Let $\mathcal{C}$ be a triangulated category and $\mathcal{E}$  a proper class of triangles on $\mathcal{C}$. An object $I\in\mathcal{C}$ is called $\mathcal{E}$-injective, if for any distinguished triangle $A\rightarrow B\rightarrow C\rightarrow \Sigma A$ in $\mathcal{E}$, the induced sequence $0\rightarrow \mathcal{C}(C,I)\rightarrow \mathcal{C}(B,I)\rightarrow \mathcal{C}(A,I)\rightarrow 0$ is exact. Denote by $\mathcal{I}$ the full subcategory of $\mathcal{E}$-injective objects of $\mathcal{C}$. We say that $\mathcal{C}$ has enough $\mathcal{E}$-injectives if for any object $A\in\mathcal{C}$ there exists a distinguished triangle $A\rightarrow I\rightarrow C\rightarrow \Sigma A$ in $\mathcal{C}$ with $I\in\mathcal{I}$.  If $\mathcal{C}$ has enough $\mathcal{E}$-injectives and enough $\mathcal{E}$-projectives and $\mathcal{I}=\mathcal{P}$, where $\mathcal{P}$ is the subcategory of $\mathcal{E}$-projectives, then we can show that $(\mathcal{C},\mathcal{C})$ is a $\mathcal{I}$-mutation pair, thus $\mathcal{C}/\mathcal{I}$ is a triangulated category by Theorem \ref{3d6}. We remark that $\mathcal{C}(\mathcal{I},\Sigma\mathcal{I})$ is not zero because $\mathcal{I}$ is closed under $\Sigma$. So $(\mathcal{C},\mathcal{C})$ is not a $\mathcal{I}$-mutation pair in the sense of Iyama-Yoshino.
\end{example}

\begin{example}(\cite[Theorem 2.3]{[J]})
Let $\mathcal{C}$ be a triangulated category with Auslander-Reiten translation $\tau$, and $\mathcal{X}$ a functorially finite subcategory with $\tau\mathcal{X}=\mathcal{X}$. Let $X\xrightarrow{f} D\xrightarrow{g} Y\xrightarrow{h} \Sigma X$ be a triangle in $\mathcal{C}$, then we can show that $f$ is a left $\mathcal{X}$-approximation if and only if $g$ is a right $\mathcal{X}$-approximation (see \cite[Lemma 2.2]{[J]}). Thus $(\mathcal{C},\mathcal{C})$ is a $\mathcal{X}$-mutation pair, and $\mathcal{C}/\mathcal{X}$ is a triangulated category by Theorem \ref{3d6}. We remark that $(\mathcal{C},\mathcal{C})$ may be not a $\mathcal{X}$-mutation pair in the sense of Iyama-Yoshino. For example, if $\mathcal{C}$ is a cluster category, then $\mathcal{C}(\mathcal{X},\Sigma\mathcal{X})$ is not zero.
\end{example}

\begin{example}(\cite[Theorem 6.17]{[N]})
Let $\mathcal{C}$ be a pseudo-triangulated category, if $(\mathcal{C},\mathcal{Z},\mathcal{D})$ is Frobenius (see \cite[Definition 5.9]{[N]}), then $(\mathcal{Z},\mathcal{Z})$ is a $\mathcal{I_{D}}$-mutation pair, where $\mathcal{I_D}$ is the full subcategory of $\mathcal{D}$ consisting of injective objects. If  $\mathcal{Z}$ is extension-closed and $\mathcal{C}$ satisfies Assumption \ref{a1}, then $\mathcal{Z}/\mathcal{I_{D}}$ is a triangulated category by Theorem \ref{3d6}.
\end{example}

\vspace{2mm}\noindent {\bf Acknowledgements} \  The authors  thank Professor Xiaowu Chen for useful comments and advices.

\end{document}